\RequirePackage{fix-cm}
\documentclass[smallextended]{svjour3}       

\usepackage{amsmath}
\usepackage{amsfonts}
\usepackage{amssymb}
\usepackage{mathrsfs} 
\usepackage{bm}       
\usepackage{booktabs} 

\usepackage{graphicx}
\usepackage{subcaption}  

\usepackage{algorithm}
\usepackage{algorithmic}

\usepackage{color}

\usepackage{cite}
\usepackage{xr-hyper}
\usepackage{hyperref}
\newcommand{\V}[1]{\boldsymbol{#1}} 
\newcommand{\M}[1]{\boldsymbol{#1}} 
\newcommand{\Set}[1]{\mathbb{#1}}

\newcommand{\norm}[1]{\left\Vert #1\right\Vert }

\newcommand{\av}[1]{\left\langle #1\right\rangle }

\smartqed  
\usepackage{graphicx}
\begin{document}

\title{A spectral-subspace-augmented POD-Galerkin method for parametrized PDEs with limited snapshot data
}


\author{Tianhao Hu         \and
        Zecheng Gan 
}


\institute{Hu, T. \at
              Hong Kong University of Science and Technology (Guangzhou) \\
              \email{thu176@connect.hkust-gz.edu.cn}           
           \and
           Gan, Z. \at
              Hong Kong University of Science and Technology (Guangzhou) \\
              \email{zechenggan@hkust-gz.edu.cn} 
}

\date{Received: date / Accepted: date}

\maketitle

\begin{abstract}
Parametrized partial differential equations (PDEs) arise in many-query simulation, optimization, control, and uncertainty quantification, where repeated full-order solves restrict the number of high-fidelity snapshots that can be generated.
This limitation is particularly pronounced in computational energy science, where multiscale models of porous-media flow, transport, and energy materials often make large snapshot datasets impractical.
Proper orthogonal decomposition (POD) constructs compact reduced bases from solution snapshots, but it may exhibit	 limited out-of-sample predictive capability when the snapshots insufficiently sample the solution manifold.
To address this limitation, we propose a spectral-subspace-augmented POD-Galerkin method (SS-POD) tailored to limited-data regimes.
SS-POD starts from a problem-adapted spectral approximation space, partitions it into orthogonal subspaces, and performs POD locally on the projected snapshot matrices.
An energy-balancing rule determines the spectral partition so that the resulting local POD problems are assigned comparable amounts of snapshot energy.
For nonlinear parametrized PDEs, SS-POD is coupled with the discrete empirical interpolation method (DEIM).
Numerical experiments show that SS-POD improves out-of-sample accuracy over standard POD-Galerkin while retaining compact reduced bases in limited-snapshot regimes.
In particular, for a Laplace--Beltrami problem on the unit sphere with only 5 snapshots, SS-POD achieves a relative error of $3.9\times 10^{-8}$ using 91 basis functions, whereas the standard POD error saturates at $7.8\times 10^{-4}$ and the spectral-Galerkin method requires 256 basis functions for comparable accuracy.
These results indicate that SS-POD provides an effective strategy for high-fidelity reduced-order modeling from limited snapshot data.

\keywords{POD-Galerkin method \and Reduced-order modeling \and Parametrized PDEs \and Spectral-subspace augmentation \and Limited snapshot data}
\subclass{65N35 \and 65M60 \and 65F55 \and 41A46}
\end{abstract}

\section{Introduction}\label{sec:intro2}
Parametrized partial differential equations (PDEs) arise in optimization, control, inverse problems, and uncertainty quantification, where the same governing model must be solved for many parameter values~\cite{hesthaven2016certified}. High-fidelity finite element, finite difference, and spectral discretizations can make such many-query studies computationally expensive~\cite{grepl2007certified,fox1971approximate,oliveira2007reduced}. Similar bottlenecks arise in computational energy science, including subsurface flow, transport, and data-driven energy-materials modeling, where simulations and high-fidelity data generation often span multiple scales~\cite{wang2026multifidelityjmi}. Reduced-order models (ROMs) reduce this cost by separating an offline basis-construction stage from a low-dimensional online solve. This strategy is effective when the solution manifold admits a compact approximation and the offline data sufficiently identify the relevant solution components~\cite{hesthaven2016certified,pinkus2012n,binev2011convergence}. When only a few high-fidelity snapshots are available, however, the reduced space may fail to capture components that are essential for out-of-sample prediction.

Reduced basis methods (RBMs) and proper orthogonal decomposition (POD) are two widely used approaches for constructing reduced spaces. RBMs select parameter samples adaptively, often through greedy algorithms guided by a posteriori error estimators~\cite{veroy2003posteriori,prud2002reliable,patera2007reduced,rozza2008reduced,quarteroni2015reduced,buffa2012priori}. In contrast, POD starts from a snapshot ensemble and computes an orthonormal basis that is optimal, in the sense of minimizing the mean-square projection error over the given snapshots. Introduced in the analysis of turbulent flows~\cite{lumley1967structure,lumey2012stochastic}, POD is closely related to principal component analysis, empirical orthogonal functions, and the Karhunen--Loeve expansion~\cite{abdi2010principal,hannachi2007empirical,loeve1977elementary}. Its strong compression capability has made POD a standard basis-construction tool in computational fluid dynamics, porous-media flow, and related applications~\cite{taira2017modal,gunzburger2012finite,ito1998reduced,ito2001reduced,ito2001reduced_2,veroy2005certified,deparis2008reduced,wang2016podgalerkinporous}.

POD depends on the empirical distribution of the available snapshots. When the snapshots are sparse or unevenly distributed, the resulting basis may accurately reproduce the sampled states but fail to represent unsampled regions of the solution manifold. Adding more POD modes within the same snapshot span cannot remedy this deficiency if that span does not contain the solution components needed for prediction. This limitation contrasts with greedy RBMs, which use an error indicator to target parameter values that are poorly represented by the current reduced space~\cite{haasdonk2017reduced}. 
Consequently, when snapshot data are insufficient, POD may incur substantial out-of-sample errors, especially for problems involving multiple scales, localized structures, high-frequency components, or strong parameter sensitivity.

Several POD variants incorporate additional structure into the modal decomposition. Spectral POD (SPOD) extracts frequency-resolved coherent structures in statistically stationary flows~\cite{sieber2016spectral,towne2018spectral}, with applications to jet dynamics~\cite{gudmundsson2011instability,sinha2014wavepacket,schmidt2017wavepackets}. Multiscale POD (mPOD) combines POD with multiresolution analysis by decomposing snapshots into prescribed frequency bands before computing local POD modes~\cite{mendez2019multi,mendez2020multiscale,mendez2018multiscale,mendez2022generalized}. These methods exploit spectral or multiscale information primarily to improve modal analysis and flow decomposition. In contrast, we use spectral subspaces to enrich POD-Galerkin reduced spaces for predictive reduced-order modeling of parametrized PDEs in limited-data regimes.

Machine-learning approaches provide another route for parametrized PDEs. Physics-informed neural networks (PINN) and Deep Ritz methods approximate PDE solutions from residual or variational formulations~\cite{raissi2019physics,yu2018deep}, while Deep BSDE methods target high-dimensional parabolic problems~\cite{han2017deep,han2018solving,weinan2021algorithms}. Operator-learning approaches, such as DeepONet and Fourier neural operators, learn solution maps directly from data~\cite{lu2021learning,li2020fourier}. Hybrid methods further integrate traditional numerical methods with data-driven modeling techniques, including random-feature PDE solvers~\cite{chen2022bridging}, non-intrusive ROMs for flow problems~\cite{xiao2015nonintrusive}, nonlinear reduced-basis methods with online neural adaptation~\cite{li2025nemytskii}, POD-enhanced deep ROMs~\cite{fresca2022pod}, neural ROM identification~\cite{wang2018model}, and PINN/RBM integrations~\cite{chen2024gpt,chen2024tgpt}. Although these approaches provide powerful alternatives, they often involve substantial training data, nonlinear optimization, or online adaptation. We instead focus on a complementary ROM setting: improving POD-Galerkin reduced bases in limited-snapshot regimes while retaining the transparency and projection structure of classical ROMs.

We propose a spectral-subspace-augmented POD-Galerkin method (SS-POD) for parametrized PDEs with limited snapshot data. SS-POD starts from a problem-adapted spectral approximation space and decomposes it into mutually orthogonal spectral subspaces. The snapshots are then projected onto these subspaces, POD bases are computed locally, and the resulting local bases are assembled into an augmented reduced basis. This construction mitigates the tendency of dominant high-energy snapshot components to obscure lower-energy components that may be relevant for out-of-sample prediction, while preserving the projection structure and interpretability of classical POD-Galerkin ROMs. 
An energy-balancing rule determines the spectral partition by assigning comparable amounts of snapshot energy to the local POD problems. We also extend the construction to nonlinear parametrized PDEs by coupling SS-POD with the discrete empirical interpolation method (DEIM)~\cite{barrault2004empirical,chaturantabut2009discrete,chaturantabut2010nonlinear,grepl2007efficient}. 
The main contributions of this work are: (i) a spectral-subspace-augmented reduced basis for data-scarce POD-Galerkin modeling; (ii) an energy-balancing rule for selecting the spectral partition; and (iii) a DEIM extension for nonlinear terms. We validate the proposed method through numerical comparisons with POD-Galerkin and spectral-Galerkin baselines on five PDE benchmarks.

The rest of the paper is organized as follows. Section~\ref{sec:POD} reviews POD-Galerkin ROMs and the offline-online decomposition. Section~\ref{sec:SS-POD} presents the SS-POD construction, the energy-balancing rule, the nonlinear extension, and the reconstruction estimate. Section~\ref{sec:numerical_result} presents numerical benchmark results. Section~\ref{sec:conclusion} concludes the paper and discusses limitations and possible extensions.

\section{POD-Galerkin Method Revisited}
\label{sec:POD}
\label{sec:Galerkin_in_ROM}

In this section, we briefly review the POD-Galerkin method. 
For simplicity, consider a linear elliptic problem with homogeneous Dirichlet boundary conditions parameterized by $\mu$,
\begin{equation}
\label{eqn:general_gov_eqn}
    \mathcal{L}(\mu) u(\V{x}; \mu) = f(\V{x}; \mu), \quad \V{x} \in \Omega,
\end{equation}
where $\mu \in \Set{P}$ and $\Set{P}$ denotes the parameter space. After spatial discretization, we write $\Set{V}$ for the finite-dimensional trial space. The space $\Set{V}$ is equipped with the inner product $\av{\cdot,\cdot}_{\Set{V}}$ and the induced norm $\norm{\V v}_{\Set{V}}=\sqrt{\av{\V v,\V v}_{\Set{V}}}$. For the homogeneous Dirichlet case, $\Set{V}$ is the discrete analogue of $H_0^1(\Omega)$, so the boundary condition is encoded in the approximation space.

The full-order variational problem associated with Eq.~\eqref{eqn:general_gov_eqn} is written as: find $\V u(\mu)\in\Set{V}$ such that
\begin{equation}\label{eqn:variational_problem}
    A(\V u(\mu), \V v; \mu) = F(\V v; \mu), \quad \forall \V v \in \Set{V}_t,
\end{equation}
where $A(\cdot,\cdot;\mu)$ and $F(\cdot;\mu)$ denote the discrete bilinear and linear forms, $\V u(\mu)$ denotes the discrete solution in $\Set{V}$, and $\Set{V}_t$ the test space. In standard Galerkin projection, $\Set{V}_t = \Set{V}$.

Many-query applications require this full-order problem to be solved for many values of $\mu$. Solving it directly each time is usually too expensive. POD-Galerkin reduction therefore separates the computation into an offline stage, where a reduced space is learned, and an online stage, where only the reduced coefficients are solved.

In the offline stage, snapshots $\M U=[\V u_1,\dots,\V u_{N_{\mathrm{s}}}]$ are collected from full-order solutions, and POD is used to extract a reduced basis $\M\Phi=\M\Phi_{\mathrm{POD}}$ from the snapshot matrix. In the online formulation below, we write $\M\Phi=[\V\phi_1,\dots,\V\phi_{N_{\mathrm{p}}}]$ ($N_{\mathrm{p}} < N_{\mathrm{g}}$) for a generic reduced basis, because the same formulation will later be used for SS-POD. 
The reduced approximation then seeks $\V u(\mu)$ in the span of these $N_{\mathrm{p}}$ modes,
\begin{equation}\label{eqn:galerkin_approx}
    \V u(\mu) \approx \hat{\V u}(\mu)
    =
    \sum_{i=1}^{N_{\mathrm{p}}} a_i(\mu)\V\phi_i
    =
    \M\Phi\V a(\mu),
\end{equation}
where $\V a(\mu)=[a_1(\mu),\dots,a_{N_{\mathrm{p}}}(\mu)]^\mathrm{T}$ contains the reduced coefficients. 

Let $\Set{V}_t$ be the reduced test space spanned by $\tilde{\M\Phi}=[\tilde{\V\phi}_1,\dots,\tilde{\V\phi}_{N_{\mathrm{p}}}]$. 
Substituting Eq.~\eqref{eqn:galerkin_approx} into Eq.~\eqref{eqn:variational_problem} and testing against $\tilde{\V\phi}_j\in\Set{V}_t$ gives the reduced system
\begin{equation}
    \sum_{i=1}^{N_{\mathrm{p}}}
    A(\V\phi_i,\tilde{\V\phi}_j;\mu)a_i(\mu)
    =
    F(\tilde{\V\phi}_j;\mu),
    \quad j=1,\dots,N_{\mathrm{p}}\;.
\end{equation}
Note that for SS-POD, the test basis not necessary coincides with the trial basis $\M\Phi$; problem-specific choices of the test space will be discussed later and presented in the numerical examples.
The projection step reduces the online solve to an $N_{\mathrm{p}}\times N_{\mathrm{p}}$ linear system. 

This model reduction strategy is efficient if two requirements are met. First, the solution manifold $\Set{M}=\{\V u(\mu)\mid \mu\in\Set{P}\}$ must be well approximated by a low-dimensional space. This property is often associated with rapid decay of the Kolmogorov $N$-width~\cite{pinkus2012n,binev2011convergence}. Second, the reduced operators must be evaluated without incurring full-order cost.
A standard assumption used to satisfy the second requirement is an affine parameter decomposition,
\begin{equation}
\label{eqn:affine_decomposition_operator}
    \mathcal{L}(\mu) = \sum_{q=1}^{Q} B_q(\mu) \mathcal{L}_q,
\end{equation}
where $\mathcal{L}_q$ are parameter-independent operators and $B_q(\mu)$ are scalar functions of $\mu$. Under this affine decomposition, the reduced matrices associated with $\mathcal{L}_q$ can be precomputed offline and assembled online through the coefficients $B_q(\mu)$. For nonlinear or non-affine operators, this separation generally requires an additional hyper-reduction step. In this work, such terms are handled by combining SS-POD with the discrete empirical interpolation method (DEIM), which will be discussed in Sec.~\ref{sec:DEIM}.

\section{The spectral-subspace-augmented POD-Galerkin (SS-POD) method}\label{sec:SS-POD}
\subsection{Construction of the SS-POD Methodology}
\subsubsection{Hierarchy of Projection Operators and Subspaces}\label{sec:SSPOD_project_operator}
POD extracts coherent structures from solution snapshots and provides the optimal low-rank approximation of the snapshot matrix in the chosen norm. Its success in ROMs is closely related to the Kolmogorov $N$-width of the solution manifold~\cite{pinkus2012n,hesthaven2016certified,volkwein2013proper}. This connection, however, should be interpreted with care: POD is optimal for the available snapshot ensemble, not automatically for the full parametric manifold. When the training set is sparse or poorly distributed, the empirical snapshot span may miss directions that are important for out-of-sample parameters.

Spectral methods provide an a priori approximation space independent of the sampled parameters. Such spaces tolerate sparse training data better than empirical bases, but they may require more modes than a POD basis fitted to the target solution family. SS-POD combines these two ingredients by projecting snapshots onto orthogonal spectral subspaces and then performing POD locally. The spectral prior prevents dominant empirical directions from determining the whole reduced space, while the local POD steps keep data adaptation within each spectral window. Fig.~\ref{fig:workflow_SSPOD} summarizes the construction.

\begin{figure}[ht]
    \centering
    \includegraphics[width=1\textwidth]{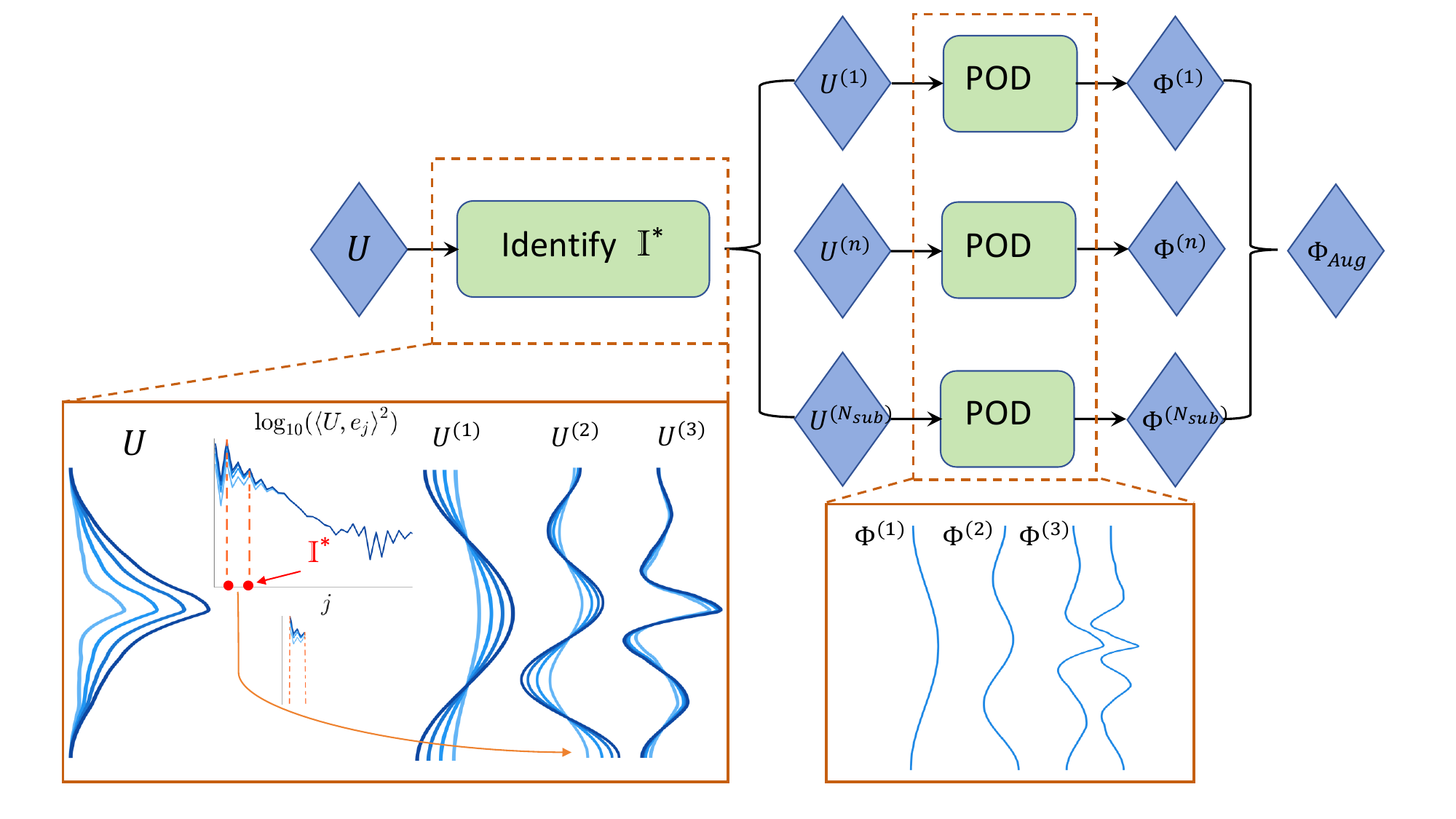}
    \caption{Workflow of the SS-POD methodology.}
    \label{fig:workflow_SSPOD}
\end{figure}

Let $\M\Phi_{\mathrm{spec}}=[\V e_1,\dots,\V e_{N_{\mathrm{max}}}]$ be a collection of $N_{\mathrm{max}}$ normalized spectral basis functions. We assume that $N_{\mathrm{max}}$ is large enough for the truncated spectral space $\Set E=\mathrm{span}\{\V e_1,\dots,\V e_{N_{\mathrm{max}}}\}$ to approximate the relevant solution states to the desired accuracy. Depending on the geometry and boundary conditions, the basis $\{\V e_j\}$ may be chosen from Fourier modes, Chebyshev polynomials, spherical harmonics, or other problem-adapted spectral functions. In the discrete setting, the modes are normalized so that $\av{\V e_i,\V e_j}_{\Set V}=\delta_{ij}$. The space $\Set E$ is decomposed into orthogonal subspaces,
\begin{equation}
    \Set{E} = \mathrm{span}\{ \V e_1, \dots, \V e_{N_{\mathrm{max}}} \} = \Set{E}_1 \oplus \Set{E}_2 \oplus \dots \oplus \Set{E}_{N_{\mathrm{sub}}},
\end{equation}
where $N_{\mathrm{sub}}$ is the number of spectral windows. Each subspace is spanned by a consecutive subset of spectral modes,
\begin{equation}
    \Set{E}_n = \mathrm{span} \{ \V e_{k_n+1}, \dots, \V e_{k_{n+1}} \},
\end{equation}
where the decomposition indices satisfy $k_1=0$, $k_{N_{\mathrm{sub}}+1}=N_{\mathrm{max}}$, and $k_i<k_j$ for $i<j$.
SS-POD selects an energy-balanced decomposition index set from the spectral coefficients of the available snapshots:
\begin{equation}
    \Set{I}^* = \{ k_1^*, \dots, k_{N_{\mathrm{sub}}+1}^* \},
\end{equation}
The procedure for selecting $\Set{I}^*$ is detailed in Sect.~\ref{subsec:IROSG}.

Given snapshots $\V u_i$ ($i=1,\dots,N_{\mathrm s}$) and the partition $\Set I^*$, the data are projected onto each subspace $\Set E_n$ by the $\Set V$-orthogonal projection $\M\Pi_n:\Set R^{N_{\mathrm g}}\to\Set R^{N_{\mathrm g}}$:
\begin{equation}
    \V u_i^{(n)} = \M \Pi_n \V u_i = \sum_{j= k_n +1}^{k_{n+1}} \av{\V u_i, \V e_{j}}_{\Set{V}} \V e_{j}, \quad i=1, \dots, N_{\mathrm{s}}.
\end{equation}
POD is then applied separately to the projected snapshot matrices $\M U^{(n)}=[\V u_1^{(n)},\dots,\V u_{N_{\mathrm s}}^{(n)}]$, yielding local bases $\M\Phi^{(n)}=[\V\phi^{(n)}_1,\dots,\V\phi^{(n)}_{N_{\mathrm p}^{(n)}}]$. The SS-POD basis is obtained by concatenating the local bases:
\begin{equation}
     \M \Phi_{\mathrm{Aug}} = [\M \Phi^{(1)}, \dots, \M \Phi^{(N_{\mathrm{sub}})}].
\end{equation}
The SS-POD basis set construction interpolates between two limiting cases:
\begin{equation} \label{ROSG_property}
     \M \Phi_{\mathrm{Aug}} = 
    \begin{cases}
         \M \Phi_{\mathrm{POD}}, & \text{if } N_{\mathrm{sub}}=1 \text{ and } N_{\mathrm{p}}^{(1)} = N_{\mathrm{p}}, \\
         \M \Phi_{\mathrm{spec}}, & \text{if } N_{\mathrm{sub}}=N_{\mathrm{max}} \text{ and } N^{(n)}_{\mathrm{p}} = 1, \, n=1, \dots, N_{\mathrm{sub}}.
    \end{cases} 
\end{equation}
The first case recovers standard POD. The second formally recovers the truncated spectral basis when every spectral window contains one active mode and the corresponding projected snapshot matrix has nonzero energy in that mode. Thus SS-POD can be viewed as a controlled interpolation between empirical POD and spectral Galerkin spaces. The parameters $N_{\mathrm{sub}}$ and $N_{\mathrm p}^{(n)}$ determine the tradeoff among accuracy, online dimension, and snapshot requirements. Algorithm~\ref{alg1:ROSG} summarizes the construction of $\M\Phi_{\mathrm{Aug}}$.

\begin{algorithm}[ht]
    \caption{Calculation of $\M \Phi_{\mathrm{Aug}}$}
    \label{alg1:ROSG}
    \begin{algorithmic}[1]
        \REQUIRE Snapshots $\M U = [\V u_1, \dots, \V u_{N_{\mathrm{s}}}]$, number of subspaces $N_{\mathrm{sub}}$, maximum number of spectral basis functions $N_{\mathrm{max}}$.
        \ENSURE Subspace-augmented basis $\M \Phi_{\mathrm{Aug}}$.

        \STATE Generate the appropriate spectral basis $\{ \V e_j \}_{j=1}^{N_{\mathrm{max}}}$.
        \STATE Determine the energy-balanced partition $\Set{I}^* = \{ k_1, \dots, k_{N_{\mathrm{sub}}+1} \}$ using Algorithm~\ref{alg4:I_ROSG} with inputs $\av{\V u_i, \V e_j}_{\Set{V}}$ and $N_{\mathrm{sub}}$.
        \STATE For each subspace $\Set{E}_n = \mathrm{span}\{ \V e_{k_n+1}, \dots, \V e_{k_{n+1}} \}$:
            \begin{itemize}
                \item Calculate the projections $\V u_{i}^{(n)} = \sum_{j= k_n +1}^{k_{n+1}} \av{\V u_{i}, \V e_{j}}_{\Set{V}} \V e_{j}$ for $i=1, \dots, N_{\mathrm{s}}$.
                \item Form the projected snapshot matrix $\M U^{(n)} = [\V u_{1}^{(n)}, \dots, \V u_{N_{\mathrm{s}}}^{(n)}]$.
            \end{itemize}
        \STATE Perform truncated SVD on each $\M U^{(n)}$ to obtain the basis $\M \Phi^{(n)} = [\V \phi^{(n)}_1, \dots, \V \phi^{(n)}_{N_{\mathrm{p}}^{(n)}}]$.
        \STATE Concatenate the bases to obtain:
            \[
                \M \Phi_{\mathrm{Aug}} = [\M \Phi^{(1)}, \dots, \M \Phi^{(N_{\mathrm{sub}})}].
            \]
    \end{algorithmic}
\end{algorithm}

\subsubsection{Energy-balanced Determination of the Decomposition Index Set $\Set{I}^*$}\label{subsec:IROSG}
SS-POD depends on the decomposition index set $\Set{I}^*$. At the two extremes it recovers standard POD and spectral-Galerkin methods; between those extremes, the energy-balanced partition controls how much snapshot information enters each spectral window.

The choice of $\Set I^*$ is based on the energy distribution of the snapshots in the spectral basis. Standard POD tends to prioritize directions with the largest empirical energy. For many PDE data sets, these directions are dominated by low-frequency or large-scale components, so lower-energy spectral ranges may receive too few basis vectors even when they are important for accuracy. SS-POD mitigates this imbalance by distributing the snapshot energy captured by the finite spectral prior approximately equally among the subspaces $\Set E_n$. Applying POD to each projected matrix $\M U^{(n)}$ then gives each spectral window its own local approximation budget.

Specifically, the energy captured by the finite spectral prior, denoted by $\mathscr{E}$, is evaluated by summing the squared spectral coefficients of the projected snapshots:
\begin{equation} \label{POD energy}
    \begin{aligned}
        \mathscr{E} & =\sum_{i=1}^{N_{\mathrm{s}}} \norm{ \sum_{n=1}^{N_{\mathrm{sub}}} \M \Pi_n  \V u_i }_{\Set{V}}^2 
         = \sum_{i=1}^{N_{\mathrm{s}}} \norm{ \sum_{j=1}^{N_{\mathrm{max}}} \av{\V u_i, \V e_j}_{\Set{V}} \V e_j }_{\Set{V}}^2 = \sum_{i=1}^{N_{\mathrm{s}}} \sum_{j=1}^{N_{\mathrm{max}}} \av{\V u_i, \V e_j}_{\Set{V}}^2.
    \end{aligned}
\end{equation}
The target energy for each subspace is defined as $\mathscr E_n=\mathscr E/N_{\mathrm{sub}}$. In the idealized case, the partition would satisfy
\begin{equation}\label{Eqn:energy_each_subspace}
        \sum_{i=1}^{N_{\mathrm{s}}} \sum_{j= k_n +1}^{k_{n+1}} \av{\V u_i, \V e_j}_{\Set{V}}^2 = \mathscr{E}_n.
\end{equation}
Discrete partition indices usually prevent exact equality. Algorithm~\ref{alg4:I_ROSG} constructs $\Set I^*$ by balancing energy over the ordered spectral coefficients.

\begin{figure}[ht]
    \centering
    \includegraphics[width=1\textwidth]{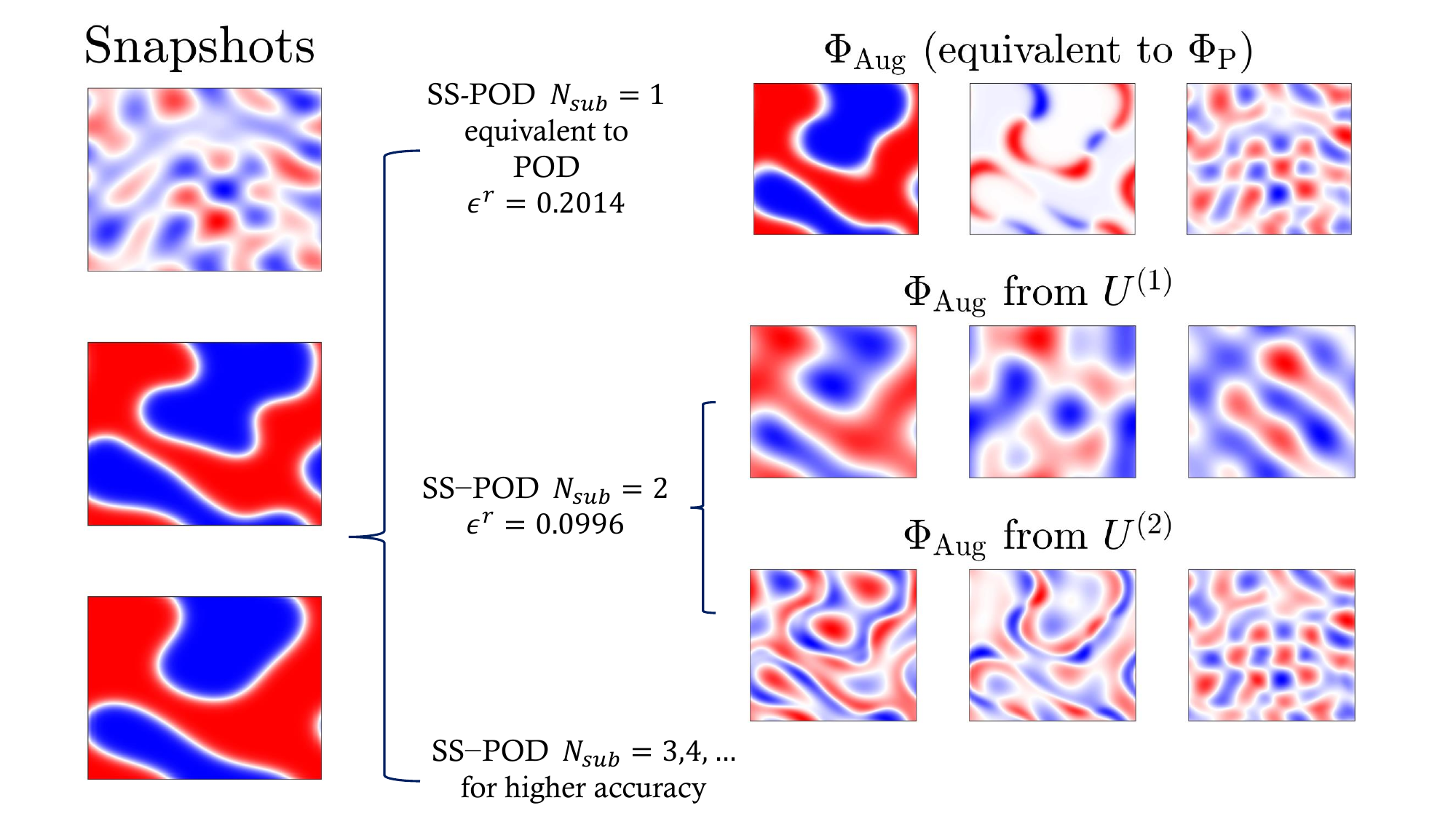}
    \caption{SS-POD basis obtained for the Allen-Cahn equation with three snapshots. The relative error $\epsilon^r$ is defined in Sect.~\ref{Tech_detail}.}
    \label{fig:ROSG_comparision}
\end{figure}

\begin{algorithm}[ht]
    \caption{Calculation of the index set $\Set{I}^*$}
    \label{alg4:I_ROSG}
    \begin{algorithmic}[1]
        \REQUIRE Spectral coefficients $\av{\V u_i, \V e_j}_{\Set{V}}$ for $i=1,\dots,N_{\mathrm{s}}$ and $j=1,\dots,N_{\mathrm{max}}$, and the number of subspaces $N_{\mathrm{sub}}$.
        \ENSURE Energy-balanced index set $\Set{I}^*$ and updated number of subspaces $N_{\mathrm{sub}}$.
            \STATE Calculate the projected spectral energy $\mathscr{E} = \sum_{i=1}^{N_{\mathrm{s}}} \sum_{j=1}^{N_{\mathrm{max}}} \av{\V u_i, \V e_j}_{\Set{V}}^2$.
            \STATE Initialization: $\Set{I}^* = \{0\}$, $\tilde{\mathscr{E}} = 0$, $\tilde{N}_{\mathrm{sub}} = 0$, and the target energy $\mathscr{E}_n = \mathscr{E} / N_{\mathrm{sub}}$, where $\tilde{N}_{\mathrm{sub}}$ counts completed subspaces.
        \FOR{$l = 1$ \TO $N_{\mathrm{max}}$}
            \STATE Accumulate energy of the current spectral mode: $\tilde{\mathscr{E}} = \tilde{\mathscr{E}} + \sum_{i=1}^{N_{\mathrm{s}}} \av{\V u_i, \V e_l}_{\Set{V}}^2$.
            \IF{$\tilde{\mathscr{E}} > \mathscr{E}_n$ and $\tilde{N}_{\mathrm{sub}} < N_{\mathrm{sub}}-1$}
                \STATE Update the index set: $\Set{I}^* = \{ \Set{I}^*, l \}$.
                \STATE Update subspace count: $\tilde{N}_{\mathrm{sub}} = \tilde{N}_{\mathrm{sub}} + 1$.
                \STATE Recalculate remaining average energy:
                \[
                \mathscr{E}_n = \frac{\mathscr{E} - \sum_{i=1}^{N_{\mathrm{s}}} \sum_{j=1}^{l} \av{\V u_i, \V e_j}_{\Set{V}}^2}{N_{\mathrm{sub}} - \tilde{N}_{\mathrm{sub}}}.
                \]
                \STATE Reset current subspace energy: $\tilde{\mathscr{E}} = 0$.
            \ENDIF
        \ENDFOR
        \STATE Set the final index: $\Set{I}^*(\text{end}) = N_{\mathrm{max}}$.
        \STATE Return the finalized $N_{\mathrm{sub}} = \tilde{N}_{\mathrm{sub}}+1$ and $\Set{I}^*$.
    \end{algorithmic}
\end{algorithm}

\subsubsection{Extension to Nonlinear PDEs via SS-POD/DEIM}\label{sec:DEIM}
Although SS-POD has been introduced for linear projection spaces, the same subspace construction can be coupled with the Discrete Empirical Interpolation Method (DEIM) to handle non-affine and nonlinear terms~\cite{barrault2004empirical,grepl2007efficient,chaturantabut2009discrete,chaturantabut2010nonlinear,chaturantabut2012state,li2020poddeimgas}. DEIM is a special case of the Empirical Interpolation Method (EIM) that selects representative interpolation indices from nonlinear basis functions, thereby reducing the cost of evaluating nonlinear reduced-order models.
Related hyper-reduction strategies, including reduced over-collocation, address the same full-grid nonlinear-evaluation bottleneck from a complementary sampling perspective~\cite{chen2021l1hyperreduction}.

In a projection-based ROM, the online stage should ideally be independent of the full-order dimension $N_{\mathrm g}$. For affine linear operators this is achieved by offline precomputation. For nonlinear terms, direct evaluation usually requires pointwise operations on the full spatial grid, causing the online cost to scale with $N_{\mathrm g}$. DEIM addresses this issue by approximating the nonlinear term as
\begin{equation}\label{DEIM-approximation}
    \V f(\mu) \approx \M \Psi \V c(\mu),
\end{equation}
where $\M\Psi$ is the DEIM basis. Provided that $\M P^\mathrm T\M\Psi$ is nonsingular, the coefficient vector $\V c(\mu)$ is determined by solving the square interpolation system
\begin{equation}
    \M P^\mathrm{T} \V f(\mu) = \M P^\mathrm{T} \M \Psi \V c(\mu).
\end{equation}
The matrix $\M P$ selects the interpolation indices in the discretized nonlinear term. It is defined as
\begin{equation}
    \M P = [\V i_{I_1}, \V i_{I_2}, \dots, \V i_{I_{N_{\mathrm{d}}}}],
\end{equation}
where $\V i_{I_k}\in\Set R^{N_{\mathrm g}}$ is the $I_k$-th canonical basis vector.

The DEIM basis is constructed from nonlinear snapshots $\M F=[\V f_1,\dots,\V f_{N_{\mathrm s}}]$. POD is applied to $\M F$, and the first $N_{\mathrm d}$ retained modes $[\V\psi_1,\dots,\V\psi_{N_{\mathrm d}}]$ form $\M\Psi$.
The interpolation indices $\{I_k\}_{k=1}^{N_{\mathrm d}}$ are selected iteratively by the standard DEIM residual criterion:
\begin{equation}
    I_k = \mathop{\mathrm{argmax}}_{i=1, \dots, N_{\mathrm{g}}} |\V i_i^\mathrm{T}(\V \psi_k - \M \Psi^{(k-1)} \V c^{(k)})|, \quad k=1, \dots, N_{\mathrm{d}},
\end{equation}
where $\M \Psi^{(k)} = [\V \psi_1, \dots, \V \psi_k]$, and $\V c^{(k)}$ is the solution to:
\begin{equation}
    (\M P^{(k-1)})^\mathrm{T} \M \Psi^{(k-1)} \V c^{(k)} = (\M P^{(k-1)})^\mathrm{T} \V \psi_k, \quad k=1, \dots, N_{\mathrm{d}}.
\end{equation}
Here, $\M P^{(k)}$ contains the first $k$ selected columns of $\M P$:
\begin{equation}
    \M P^{(k)} = [\V i_{I_1}, \V i_{I_2}, \dots, \V i_{I_k}], \quad k=1, \dots, N_{\mathrm{d}}.
\end{equation}
The resulting DEIM approximation is
\begin{equation}\label{eqn:DEIM_approximation}
    \V f(\mu) \approx \M \Psi (\M P^\mathrm{T} \M \Psi)^{-1} \M P^\mathrm{T} \V f(\mu).
\end{equation}
The term $\M \Psi (\M P^\mathrm{T} \M \Psi)^{-1}$ can be precomputed during the offline stage. Consequently, evaluating the nonlinear term in the reduced equations only requires the entries selected by $\M P^\mathrm{T} \V f(\mu)$; this removes the full-grid nonlinear evaluation from the online reduced solve, although full-state reconstruction or postprocessing still scales with $N_{\mathrm{g}}$. The detailed DEIM procedure is summarized in Algorithm~\ref{alg2:DEIM}.

\begin{algorithm}[ht]
    \caption{Discrete Empirical Interpolation Method (DEIM)}
    \label{alg2:DEIM}
    \begin{algorithmic}[1]
        \REQUIRE DEIM basis $\M \Psi = [\V \psi_1, \dots, \V \psi_{N_{\mathrm{d}}}] \in \Set{R}^{N_{\mathrm{g}} \times N_{\mathrm{d}}}$.
        \ENSURE Interpolation matrix $\M P = [\V i_{I_1}, \V i_{I_2}, \dots, \V i_{I_{N_{\mathrm{d}}}}] \in \Set{R}^{N_{\mathrm{g}} \times N_{\mathrm{d}}}$.
        \STATE Initialization for DEIM:
        \[
            I_1 = \mathop{\mathrm{argmax}}_{i=1, \dots, N_{\mathrm{g}}} |\V i_{i}^\mathrm{T} \V \psi_1|.
        \]
        Set $\M \Psi^{(1)} = [\V \psi_1]$ and $\M P^{(1)} = [\V i_{I_1}]$.
        \FOR{$k=2$ \TO $N_{\mathrm{d}}$} 
            \STATE Solve for $\V c^{(k)}$ from the linear system:
            \[
                (\M P^{(k-1)})^\mathrm{T} \M \Psi^{(k-1)} \V c^{(k)} = (\M P^{(k-1)})^\mathrm{T} \V \psi_k.
            \]
            \STATE Identify the spatial index $I_k$ where the approximation error is maximized:
            \[
                I_k = \mathop{\mathrm{argmax}}_{i=1, \dots, N_{\mathrm{g}}} |\V i_{i}^\mathrm{T}(\V \psi_k - \M \Psi^{(k-1)} \V c^{(k)})|.
            \]
            \STATE Update the basis and selection matrix:
            $\M \Psi^{(k)} = [\M \Psi^{(k-1)}, \V \psi_k]$, $\M P^{(k)} = [\M P^{(k-1)}, \V i_{I_k}]$.
        \ENDFOR
        \STATE Set $\M P = \M P^{(N_{\mathrm{d}})}$.
    \end{algorithmic}
\end{algorithm}

The DEIM basis is also snapshot-dependent, so standard POD/DEIM can inherit the same data-scarcity limitations as the solution basis. We apply SS-POD to the nonlinear snapshots as well. The augmented index set $\Set{I}^*_{\mathrm{D}}$ follows the same energy-based construction as $\Set{I}^*$, with solution snapshots $\M U$ replaced by nonlinear snapshots $\M F$. Algorithm~\ref{alg3:DEIM/ROSG/brief} gives the resulting SS-POD/DEIM procedure.

\begin{algorithm}[ht]
    \caption{SS-POD/DEIM for Nonlinear Approximation with Data Scarcity}
    \label{alg3:DEIM/ROSG/brief}
    \begin{algorithmic}[1]
        \REQUIRE Snapshots of the nonlinear term $\M F$ or solutions $\M U$, number of subspaces $N_{\mathrm{sub}}^\mathrm{D}$, and maximum spectral basis functions $N_{\mathrm{max}}^\mathrm{D}$.
        \ENSURE Augmented DEIM basis $\M \Psi_{\mathrm{Aug}}$ and interpolation matrix $\M P$.
        \STATE Generate $\M F$ from $\M U$ or collect snapshots of the nonlinear term $\M F$ during simulation.
        \STATE Invoke Algorithm~\ref{alg1:ROSG} using inputs: $\M U = \M F$, $N_{\mathrm{sub}} = N_{\mathrm{sub}}^\mathrm{D}$, and $N_{\mathrm{max}} = N_{\mathrm{max}}^\mathrm{D}$. 
        \STATE Obtain the augmented basis $\M \Psi_{\mathrm{Aug}}$.
        \STATE Invoke Algorithm~\ref{alg2:DEIM} with $\M \Psi = \M \Psi_{\mathrm{Aug}}$ to compute $\M P$.
        \STATE Return $\M \Psi_{\mathrm{Aug}}$ and $\M P$.
    \end{algorithmic}
\end{algorithm}

\subsection{Error and Complexity Analysis}
\subsubsection{Snapshot Reconstruction Error}
\begin{theorem}[Snapshot Reconstruction Error of SS-POD]
\label{thm:recons_error}
Let $\M U = [\V u_1,\ldots,\V u_{N_{\mathrm{s}}}] \in \Set{R}^{N_{\mathrm{g}}\times N_{\mathrm{s}}}$ be a snapshot matrix. Define the matrix norm induced by the discrete $\Set V$-inner product as
\[
    \norm{\M X}_{\Set V,\mathrm{F}}^2 := \sum_{i=1}^{N_{\mathrm{s}}} \norm{\V x_i}_{\Set V}^2,
    \qquad \M X=[\V x_1,\ldots,\V x_{N_{\mathrm{s}}}].
\]
Let $\M \Pi_n$ be the $\Set V$-orthogonal projection onto $\Set E_n$, let $\M U^{(n)}=\M \Pi_n\M U$, and let
\[
    \M U_{\Set E}=\sum_{n=1}^{N_{\mathrm{sub}}}\M U^{(n)},\qquad
    \M R_{\Set E}=\M U-\M U_{\Set E}.
\]
For each $n$, let $\M U_{\mathrm r}^{(n)}$ be the rank-truncated POD approximation of $\M U^{(n)}$ in $\Set E_n$, and set
\[
    \M U_{\mathrm r}=\sum_{n=1}^{N_{\mathrm{sub}}}\M U_{\mathrm r}^{(n)}.
\]
Suppose that the local truncation satisfies
\begin{equation}\label{eqn:recons_err_in_subspace}
    \norm{\M U^{(n)}-\M U_{\mathrm r}^{(n)}}_{\Set V,\mathrm{F}}^2
    \le \epsilon \norm{\M U^{(n)}}_{\Set V,\mathrm{F}}^2,
    \qquad n=1,\ldots,N_{\mathrm{sub}},
\end{equation}
for a prescribed tolerance $\epsilon>0$. Then
\begin{equation}\label{eqn:error_bound_weighted}
    \norm{\M U-\M U_{\mathrm r}}_{\Set V,\mathrm{F}}^2
    \le
    \norm{\M R_{\Set E}}_{\Set V,\mathrm{F}}^2
    + \epsilon \norm{\M U_{\Set E}}_{\Set V,\mathrm{F}}^2.
\end{equation}
In particular, if $\M R_{\Set E}=\M 0$, then
\begin{equation}\label{eqn:error_bound}
    \norm{\M U-\M U_{\mathrm r}}_{\Set V,\mathrm{F}}^2
    \le \epsilon \norm{\M U}_{\Set V,\mathrm{F}}^2.
\end{equation}
Moreover, if the discrete $\Set V$-norm and Euclidean norm satisfy
\[
    m_V\norm{\V v}_2^2\le \norm{\V v}_{\Set V}^2\le M_V\norm{\V v}_2^2,
    \qquad 0<m_V\le M_V<\infty,
\]
then,when $\M R_{\Set E}=\M 0$,
\begin{equation}\label{eqn:error_bound_euclidean}
    \norm{\M U-\M U_{\mathrm r}}_{\mathrm{F}}^2
    \le \frac{M_V}{m_V}\,\epsilon \norm{\M U}_{\mathrm{F}}^2.
\end{equation}
\end{theorem}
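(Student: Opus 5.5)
The argument is a Pythagorean decomposition in the $\Set V$-inner product. Write the total error columnwise as
\[
\M U-\M U_{\mathrm r}=\M R_{\Set E}+\sum_{n=1}^{N_{\mathrm{sub}}}\bigl(\M U^{(n)}-\M U_{\mathrm r}^{(n)}\bigr).
\]
The first step is to check that these summands are mutually $\Set V$-orthogonal column by column.

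For $n\neq m$, each column of $\M U^{(n)}-\M U_{\mathrm r}^{(n)}$ lies in $\Set E_n$. This holds because $\M U_{\mathrm r}^{(n)}$ is the POD truncation of a matrix whose columns lie in $\Set E_n$, so its left singular vectors lie in $\Set E_n$. Since $\av{\V e_i,\V e_j}_{\Set V}=\delta_{ij}$ and the windows use disjoint index sets, $\Set E_n\perp\Set E_m$.

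Each column of $\M R_{\Set E}$ is $\V u_i-\sum_n\M\Pi_n\V u_i=\V u_i-\M\Pi_{\Set E}\V u_i$, where $\M\Pi_{\Set E}=\sum_n\M\Pi_n$ is the $\Set V$-orthogonal projection onto $\Set E$. Hence these columns are $\Set V$-orthogonal to $\Set E$, and in particular to every $\Set E_n$.

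With orthogonality established, the Pythagorean theorem applied to each column and summed over $i$ gives
\[
\norm{\M U-\M U_{\mathrm r}}_{\Set V,\mathrm F}^2=\norm{\M R_{\Set E}}_{\Set V,\mathrm F}^2+\sum_n\norm{\M U^{(n)}-\M U_{\mathrm r}^{(n)}}_{\Set V,\mathrm F}^2 .
\]
Applying hypothesis \eqref{eqn:recons_err_in_subspace} to each term bounds the sum by $\epsilon\sum_n\norm{\M U^{(n)}}_{\Set V,\mathrm F}^2$. By the same orthogonality of the $\Set E_n$, this equals $\epsilon\norm{\M U_{\Set E}}_{\Set V,\mathrm F}^2$, which is exactly the energy identity \eqref{POD energy}. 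This proves \eqref{eqn:error_bound_weighted}.

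If $\M R_{\Set E}=\M 0$, then $\M U_{\Set E}=\M U$, and \eqref{eqn:error_bound} follows immediately.

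For the Euclidean estimate, apply the norm equivalence column by column. The lower bound gives $m_V\norm{\M U-\M U_{\mathrm r}}_{\mathrm F}^2\le\norm{\M U-\M U_{\mathrm r}}_{\Set V,\mathrm F}^2$. Combining this with \eqref{eqn:error_bound} and then the upper bound $\norm{\M U}_{\Set V,\mathrm F}^2\le M_V\norm{\M U}_{\mathrm F}^2$ yields \eqref{eqn:error_bound_euclidean}.

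The only delicate point is to make precise that the POD truncation is taken in the $\Set V$-inner product, so that $\M U_{\mathrm r}^{(n)}$ stays inside $\Set E_n$. To handle this, I would represent $\M U^{(n)}$ by its coefficient matrix $\M C^{(n)}$ in the $\Set V$-orthonormal basis $\{\V e_{k_n+1},\dots,\V e_{k_{n+1}}\}$. For this representation $\norm{\cdot}_{\Set V,\mathrm F}$ coincides with the Euclidean Frobenius norm of the coefficients. The truncated SVD of $\M C^{(n)}$ mapped back to the grid is then the local POD approximation, and it manifestly lies in $\Set E_n$. Everything else is routine.
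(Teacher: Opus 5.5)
Your proposal is correct and follows the paper's proof essentially step for step. Both use the same columnwise Pythagorean splitting of $\M U-\M U_{\mathrm r}$ into $\M R_{\Set E}$ and the mutually $\Set V$-orthogonal local errors, sum the local hypothesis using orthogonality of the blocks $\M U^{(n)}$, and apply the two one-sided norm-equivalence inequalities for the Euclidean bound. Your coefficient-based clarification mirrors the paper's remark on Schmidt--Eckart--Young in $\Set V$-consistent coordinates. One correction: the columns of $\M U_{\mathrm r}^{(n)}$ lie in $\Set E_n$ for any truncated SVD, Euclidean or weighted, because its range lies in that of $\M U^{(n)}$. The choice of inner product therefore matters only for the local estimate, which the theorem assumes.
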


\begin{remark}
This result concerns reconstruction of the available snapshot matrix; it is not an a priori error bound for out-of-sample parameters. The residual $\M R_{\Set E}$ records the part of the snapshots not represented by the finite spectral prior. If this residual is small but nonzero, Eq.~\eqref{eqn:error_bound_weighted} should be used instead of the simplified bound in Eq.~\eqref{eqn:error_bound}. The constant $M_V/m_V$ is a norm-equivalence constant for the chosen discretization and inner product; it is not a data-driven quantity.
\end{remark}

\begin{proof}
Because $\M \Pi_n$ are $\Set V$-orthogonal projections onto pairwise orthogonal subspaces, $\M R_{\Set E}$ is $\Set V$-orthogonal to every $\Set E_n$, and the local errors $\M U^{(n)}-\M U_{\mathrm r}^{(n)}$ lie in mutually orthogonal subspaces. Therefore,
\begin{equation}
\begin{aligned}
    \norm{\M U-\M U_{\mathrm r}}_{\Set V,\mathrm{F}}^2
    &= \norm{\M R_{\Set E}
    + \sum_{n=1}^{N_{\mathrm{sub}}}
    (\M U^{(n)}-\M U_{\mathrm r}^{(n)})}_{\Set V,\mathrm{F}}^2  \\
    &= \norm{\M R_{\Set E}}_{\Set V,\mathrm{F}}^2
    + \sum_{n=1}^{N_{\mathrm{sub}}}
    \norm{\M U^{(n)}-\M U_{\mathrm r}^{(n)}}_{\Set V,\mathrm{F}}^2 .
\end{aligned}
\end{equation}
Using the assumed local truncation estimate in Eq.~\eqref{eqn:recons_err_in_subspace} gives
\begin{equation}
    \norm{\M U-\M U_{\mathrm r}}_{\Set V,\mathrm{F}}^2
    \le
    \norm{\M R_{\Set E}}_{\Set V,\mathrm{F}}^2
    + \epsilon \sum_{n=1}^{N_{\mathrm{sub}}}
    \norm{\M U^{(n)}}_{\Set V,\mathrm{F}}^2.
\end{equation}
Since the projected snapshot blocks $\M U^{(n)}$ are also mutually $\Set V$-orthogonal,
\begin{equation}
    \sum_{n=1}^{N_{\mathrm{sub}}}\norm{\M U^{(n)}}_{\Set V,\mathrm{F}}^2
    = \norm{\M U_{\Set E}}_{\Set V,\mathrm{F}}^2,
\end{equation}
which proves Eq.~\eqref{eqn:error_bound_weighted}. If $\M R_{\Set E}=\M 0$, then $\M U_{\Set E}=\M U$ and Eq.~\eqref{eqn:error_bound} follows.

Finally, the norm equivalence
$m_V\norm{\V v}_2^2\le\norm{\V v}_{\Set V}^2\le M_V\norm{\V v}_2^2$
implies
\[
    \norm{\M X}_{\mathrm F}^2\le m_V^{-1}\norm{\M X}_{\Set V,\mathrm F}^2,
    \qquad
    \norm{\M X}_{\Set V,\mathrm F}^2\le M_V\norm{\M X}_{\mathrm F}^2.
\]
Applying these two inequalities to Eq.~\eqref{eqn:error_bound} yields Eq.~\eqref{eqn:error_bound_euclidean}.
\end{proof}

\begin{remark}
    The local estimate in Eq.~\eqref{eqn:recons_err_in_subspace} follows from the Schmidt-Eckart-Young theorem when the POD/SVD is performed in coordinates consistent with the discrete $\Set V$-inner product. The Euclidean version follows after the norm-equivalence conversion above.
\end{remark}

\subsubsection{Discussions about the Online Approximation Error}\label{sec:error_online}
The snapshot reconstruction estimate above does not by itself guarantee accuracy for a new parameter value $\mu$ outside the training set. For an out-of-sample solution $\V u=\V u(\mu)$, the SS-POD approximation error can be decomposed across the spectral subspaces as
\begin{equation}
\begin{aligned}
    \norm{ \V u - \M \Phi \V a } 
    &= \norm{ \sum_{n=1}^{N_{\mathrm{sub}}} \V u^{(n)} - \sum_{n=1}^{N_{\mathrm{sub}}} \M \Phi^{(n)} \V a^{(n)} } \\
    &\le \sum_{n=1}^{N_{\mathrm{sub}}} \norm{ \V u^{(n)} - \M \Phi^{(n)} \V a^{(n)} }.
\end{aligned}
\end{equation}
This relation is an error decomposition, not a closed a priori bound. The terms on the right depend on how well each local POD space, learned from projected training snapshots, represents the corresponding component of the new solution. The energy-balanced partition reduces the risk that a single global POD step discards low-energy spectral components. It does not remove the usual accuracy requirements: the spectral prior must approximate the relevant solution class, the snapshot set must contain representative parameter information, and the Galerkin or DEIM online solve must be accurate enough for the target tolerance.

\subsubsection{Complexity Analysis and Parallel Computing for the Offline Procedure}\label{sec:complexity}

Let $N_{\mathrm{basis}}$ denote the total number of reduced basis functions used by a given method. For SS-POD, $N_{\mathrm{basis}}=\sum_{n=1}^{N_{\mathrm{sub}}}N_{\mathrm p}^{(n)}$; for standard POD and spectral-Galerkin methods, it is $N_{\mathrm p}$ and $N_{\mathrm{max}}$, respectively.

The main computational costs are summarized below. The estimates are intended as order-of-magnitude costs for the implementations used in this work; they do not account for all possible sparsity, tensor-product, or fast-transform optimizations.
\begin{itemize}
    \item For some properly chosen Fourier or Chebyshev bases, the spectral coefficients can be computed with fast transforms, with cost $\mathcal{O}(N_{\mathrm{s}}N_{\mathrm{g}}\log N_{\mathrm{g}})$ when the discretization supports such transforms.
    \item  The identification of the energy-balanced partition $\Set{I}^*$ has a complexity of $\mathcal{O}(N_{\mathrm{s}} N_{\mathrm{g}})$.
    \item Computing the retained POD modes by truncated SVD costs approximately $\mathcal{O}(N_{\mathrm{s}}N_{\mathrm{g}}N_{\mathrm{basis}})$, up to implementation-dependent constants and ignoring discarded near-null modes.
    \item The precomputation of reduced mass and stiffness matrices scales with two transform-based terms: $\mathcal{O}(N_{\mathrm{basis}}^2N_{\mathrm{g}})$ for reduced matrix contractions and $\mathcal{O}(N_{\mathrm{basis}}N_{\mathrm{g}}\log N_{\mathrm{g}})$ for transform operations. Some spectral-Galerkin matrix entries can instead be evaluated analytically.
    \item For $N_\mu$ new parameter values, online affine assembly scales as $\mathcal{O}(N_\mu N_{\mathrm{basis}}^2)$, with up to $\mathcal{O}(N_\mu N_{\mathrm{basis}}^3)$ additional cost for dense direct reduced solves.
    \item Reconstructing full-order solution fields from reduced coefficients requires $\mathcal{O}(N_{\mathrm{g}}N_{\mu}N_{\mathrm{basis}})$ operations.
\end{itemize}
The local POD computations for the projected matrices $\M U^{(n)}$ are independent across subspaces and can be parallelized. We implemented this offline step in MATLAB with the Parallel Computing Toolbox.

\subsection{Implementation Details and Practical Considerations}
\subsubsection{Boundary Conditions: Dirichlet BC and Choice of Test Functions} \label{choice_of_test_function}
Boundary conditions require special care in projection-based ROMs because a basis constructed from snapshots or spectral modes does not automatically satisfy the desired test-space constraints. For inhomogeneous Dirichlet conditions, standard remedies include the control function method~\cite{graham1999optimal,gunzburger2007reduced} and modified basis methods~\cite{gunzburger2007reduced}. In Chebyshev-Galerkin discretizations, boundary-compatible trial and test functions can be constructed following Shen~\cite{shen1995efficient} or Heinrichs~\cite{heinrichs1991stabilized}. In the Dirichlet examples below, we project the trial basis functions onto the homogeneous boundary-compatible Chebyshev subspace. Let $\V\chi_j=\V T_j-\V T_{j-2}$ for $j=2,\ldots,N_{\mathrm{max}}$. The test function associated with $\V\phi_{\mathrm{Aug},i}$ is written as
\begin{equation}\label{eqn:test_function}
   \tilde{\V \phi}_{\mathrm{Aug},i} = \sum_{j=2}^{N_{\mathrm{max}}} c_{ij}\V\chi_j,
\end{equation}
where the coefficients solve the Gram system
\begin{equation}\label{eqn:test_function_gram}
    \sum_{\ell=2}^{N_{\mathrm{max}}} \av{\V\chi_\ell,\V\chi_j}_{\Set V} c_{i\ell}
    = \av{\V \phi_{\mathrm{Aug},i},\V\chi_j}_{\Set V},
    \qquad j=2,\ldots,N_{\mathrm{max}}.
\end{equation}
If $\{\V\chi_j\}$ is first orthonormalized in the $\Set V$-inner product, this reduces to the simple coefficient formula given by direct inner products. The resulting $\tilde{\V\phi}_{\mathrm{Aug},i}$ satisfies the homogeneous boundary condition by construction. This test-space construction can be interpreted as a projection of the SS-POD trial basis onto
\begin{equation}
    \mathrm{span} \{ \V T_j - \V T_{j-2} \}_{j=2}^{N_{\mathrm{max}}}.
\end{equation}
For a 1D problem with Dirichlet boundary conditions, this construction leaves two boundary degrees of freedom in the reduced coefficient system. We close the system by imposing the two boundary constraints
\begin{equation}
   \sum_i a_i(\mu)\V\phi_{\mathrm{Aug},i}(x)\big|_{x=\pm1}=0.
\end{equation}

\subsubsection{Threshold for Spurious Basis Functions} \label{threshold_junk_basis}
Truncation thresholds are used to remove numerically insignificant modes and to keep $\sum_{n=1}^{N_{\mathrm{sub}}}N_{\mathrm p}^{(n)}\le N_{\mathrm{max}}$. Modes associated with very small singular values contribute little to the snapshot reconstruction and can worsen conditioning in the reduced systems.
We use $\mathrm{tol}_{\mathrm p}$ for standard POD and $\mathrm{tol}_{\mathrm p}^{(n)}$ for the local SS-POD truncations; only modes whose singular values exceed the prescribed tolerance are retained.
Analogously, thresholds $\mathrm{tol}_{\mathrm d}$ and $\mathrm{tol}_{\mathrm d}^{(n)}$ are used for DEIM and SS-POD/DEIM. 
All the parameter values used in the reported numerical experiments are listed in Table~\ref{tab:para}.

\section{Numerical Results}\label{sec:numerical_result}

\subsection{Technical Details}\label{Tech_detail}

\subsubsection{Notations and Parameter Setup}

Let $\M U_{\mathrm{test}}=[\V u(\mu_1),\dots,\V u(\mu_{N_\mu})]$ collect the full-order test solutions, and let $\hat{\M U}_{\mathrm{test}}$ be the corresponding ROM approximation. The error matrix and relative error, measured in the vectorized $\ell^2$ norm equivalently the Frobenius norm, are
\begin{align}
    \M \epsilon &= \M U_{\mathrm{test}} - \hat{\M U}_{\mathrm{test}}, \\
   \epsilon^{\mathrm{r}} &= \frac{\norm{\M \epsilon}_{\mathrm{F}}}{\norm{\M U_{\mathrm{test}}}_{\mathrm{F}}}.
\end{align}
When POD and SS-POD errors are reported separately, we use $\M\epsilon_{\mathrm P}$ and $\M\epsilon_{\mathrm S}$, respectively.

For nonlinear examples, we also report the DEIM approximation error for the nonlinear snapshot matrix $\M F_{\mathrm{test}}=[\V f(\mu_1),\dots,\V f(\mu_{N_\mu})]$:
\begin{equation}
   \epsilon_{\mathrm{D}}^{\mathrm{r}} = \frac{\norm{\M \Psi (\M P^\mathrm{T} \M \Psi)^{-1} \M P^\mathrm{T} \M F_{\mathrm{test}} - \M F_{\mathrm{test}}}_{\mathrm{F}}}{\norm{\M F_{\mathrm{test}}}_{\mathrm{F}}},
\end{equation}
The number of reduced basis functions used for the solution approximation is denoted by $N_{\mathrm{basis}}$, as in Sect.~\ref{sec:complexity}. For nonlinear terms approximated by POD/DEIM or SS-POD/DEIM, the corresponding DEIM basis size is denoted by $N_{\mathrm{basis}}^{\mathrm D}$.

For nonlinear time-dependent Galerkin systems, the reduced coefficients are obtained by solving nonlinear algebraic systems at each time step. We use the Levenberg-Marquardt algorithm, initializing $\V a^{t+1}$ with $\V a^t$, and set the stopping tolerances to $\mathrm{tol}_{\mathrm{fun}}=10^{-11}$ and $\mathrm{tol}_{\mathrm{grad}}=10^{-11}$.
\begin{table}[ht]
\caption{Parameter settings for the numerical experiments.}
\label{tab:para}
\centering
\begin{tabular}{llll}
\hline\noalign{\smallskip}
 & Poisson & Helmholtz & Heat \\
\noalign{\smallskip}\hline\noalign{\smallskip}
$N_{\mathrm{max}}$ & 41$\times$41 & 31$\times$31 & 400 \\
$\mathrm{tol}_{\mathrm{p}}^{n}$ & $1\times 10^{-9}$ & $1\times 10^{-9}$ & $1\times 10^{-7}$ \\
\noalign{\smallskip}\hline\noalign{\smallskip}
 & 1D Allen-Cahn & 2D Allen-Cahn & Laplace-Beltrami \\
\noalign{\smallskip}\hline\noalign{\smallskip}
$N_{\mathrm{max}}$ & 150 & 80$\times$80 & 256 ($l=15$) \\
$\mathrm{tol}_{\mathrm{p}}^{n}$ & $1\times 10^{-11}$ & $1\times 10^{-6}$ & $1\times 10^{-10}$ \\
\noalign{\smallskip}\hline
\end{tabular}
\end{table}
The MATLAB function \texttt{svds} is used for truncated SVD computations. Since the snapshot counts are small in the reported data-scarce tests, the requested SVD rank is initially set to $N_{\mathrm s}$, and modes with negligible singular values are subsequently removed by the tolerances in Table~\ref{tab:para}. For the nonlinear examples, we set $N_{\mathrm{sub}}^{\mathrm D}=N_{\mathrm{sub}}$ as an empirical choice so that the nonlinear basis receives the same spectral-subspace resolution as the solution basis.

In randomized tests, snapshots are selected using MATLAB's default random number generator. In the sphere example, the spherical harmonic transform library of~\cite{politis2016microphone} is used for spherical transforms and numerical integration.

\subsubsection{Temporal Discretization Scheme}

For time-dependent problems, the temporal interval is discretized uniformly with step size $\Delta t=t_{i+1}-t_i$. Because the purpose of the experiments is to compare spatial reduced bases, we use sufficiently stable temporal discretizations to limit the influence of time-stepping error. Unless otherwise stated, the Galerkin system is discretized with the second-order Adams-Moulton method, i.e., the trapezoidal rule, which is A-stable for linear autonomous problems.
For the 2D Allen-Cahn example, we additionally use an operator splitting scheme, since this is a standard and efficient treatment of the reaction and diffusion components.

\subsection{Linear, Time-independent Cases: 2D Poisson and Helmholtz Equations}

\subsubsection{2D Poisson Equation}\label{numerical:Multi_Poisson}

We first consider a 2D Poisson equation with a parametrized multiscale source:
\begin{equation}\label{eqn:poisson}
    \left \{
    \begin{array}{ll}
    -\Delta u(x,y;a_k) = f(x,y;a_k) &  (x,y) \in \Omega \setminus \partial\Omega \\
    u(x,y;a_k) = 0 & (x,y) \in  \partial\Omega
    \end{array}
    \right.
\end{equation}
where $\Omega=[-1,1]^2$ and
\begin{equation}
    f(x,y;a_k) = \prod_{k=1}^{N} (1+\frac{1}{2}\cos(a_k \pi (x+y)))(1+\frac{1}{2}\sin(a_k \pi (x-3y))).
\end{equation}
The parameter $a_k$ varies in $[2^{k-2},1.5\times2^{k-2}]$, with $N=2$. Fig.~\ref{fig:source_ref_err_poisson} shows the source, the reference solution, and the error distributions for a limited-snapshot test. With $N_{\mathrm s}=15$ snapshots, standard POD gives a larger relative error than SS-POD for this test.

\begin{figure}[ht]
    \centering
    \includegraphics[width=1\textwidth]{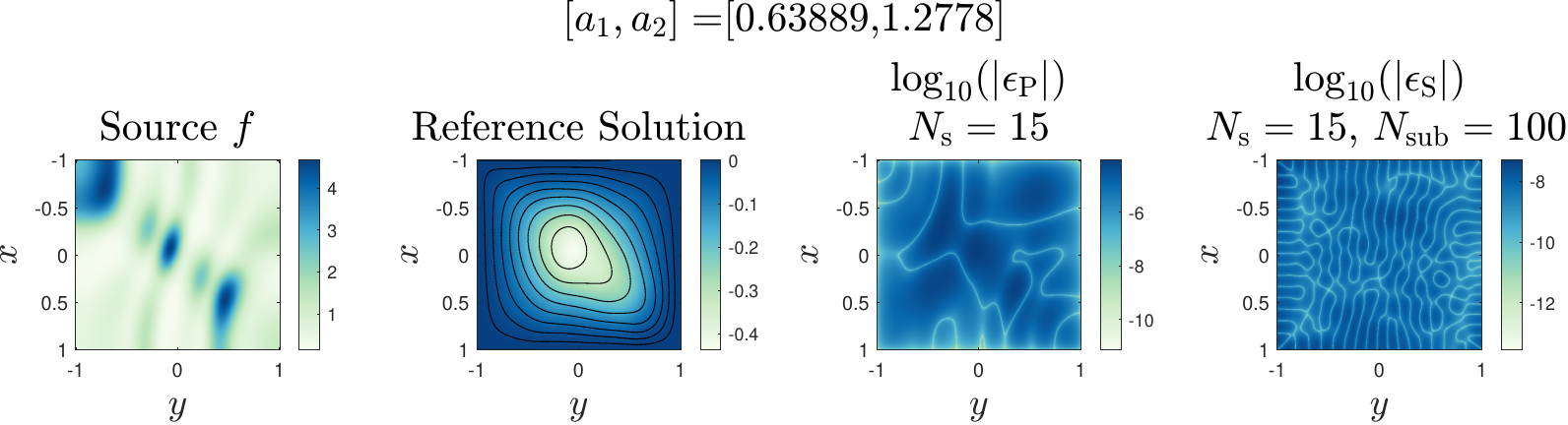}
    \caption{Poisson equation results: source $f$ (left), reference solution (middle-left), and error maps for POD (middle-right, $N_{\mathrm{basis}}=15, \epsilon_{\mathrm{P}}^\mathrm{r}=3.70\times 10^{-4}$) and SS-POD (right, $N_{\mathrm{basis}}=361, \epsilon_\mathrm{S}^\mathrm{r}=5.92\times 10^{-7}$). $N_{\mathrm{sub}}$ is the number of subspaces generated by Algorithm~\ref{alg4:I_ROSG}.}
    \label{fig:source_ref_err_poisson}
\end{figure}

For SS-POD, we use tensor-product Chebyshev functions
\begin{equation}\label{eqn:2d_cheb}
    T_{i,j}(x,y) = T_{i}(x) T_{j}(y).
\end{equation}
The two-dimensional indices $(i,j)$ are ordered by $k=\sqrt{i^2+j^2}$ so that the energy-balanced partition can be applied to a one-dimensional spectral ordering. The Galerkin projection yields
\begin{equation}\label{eqn:gov_poisson}
   -\left(\av{\tilde{\M \Phi}, \M \Phi_{xx}}_{\Set{V}} + \av{\tilde{\M \Phi}, \M \Phi_{yy}}_{\Set{V}}\right) \V a(\mu) = \av{\tilde{\M \Phi}, \V f(\mu)}_{\Set{V}},
\end{equation}
subject to the boundary condition $\M \Phi(I^{\mathrm{b}},:) \V a(\mu) = \V 0$. 

Fig.~\ref{fig:ROSG_numerical_tests}(a) compares the three reduced spaces. The Chebyshev-Galerkin curve gives the data-independent spectral baseline. Standard POD decreases the error at small basis sizes and then saturates in this limited-snapshot setting. SS-POD uses the spectral prior together with the snapshots and reaches $\epsilon_\mathrm{S}^\mathrm{r}=5.92\times 10^{-7}$ for the reported configuration.

\begin{figure}[ht]
    \centering 
    \begin{subfigure}[b]{0.32\textwidth} 
        \centering
        \includegraphics[width=\textwidth]{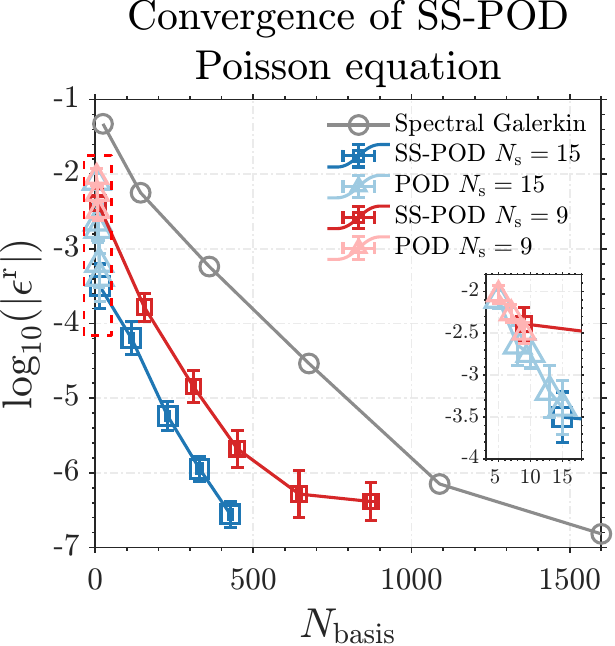}
        \caption{}
        \label{fig:sub1}
    \end{subfigure}
    \hfill 
    \begin{subfigure}[b]{0.32\textwidth}
        \centering
        \includegraphics[width=\textwidth]{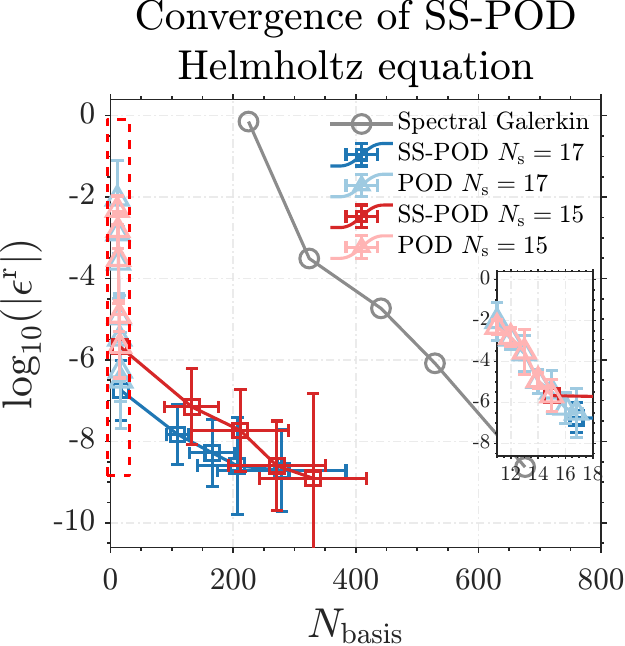}
        \caption{}
        \label{fig:sub2}
    \end{subfigure}
    \hfill 
    \begin{subfigure}[b]{0.32\textwidth}
        \centering
        \includegraphics[width=\textwidth]{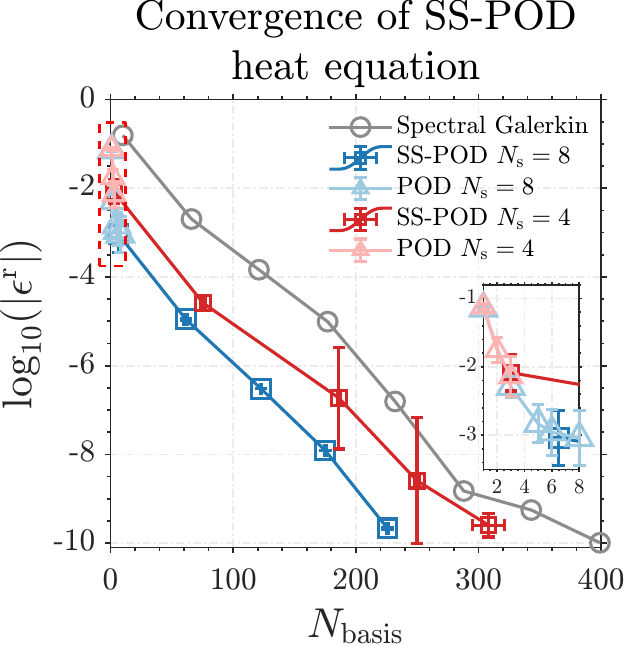}
        \caption{}
        \label{fig:sub3}
    \end{subfigure}

    \vspace{0.5cm} 

    \begin{subfigure}[b]{0.32\textwidth}
        \centering
        \includegraphics[width=\textwidth]{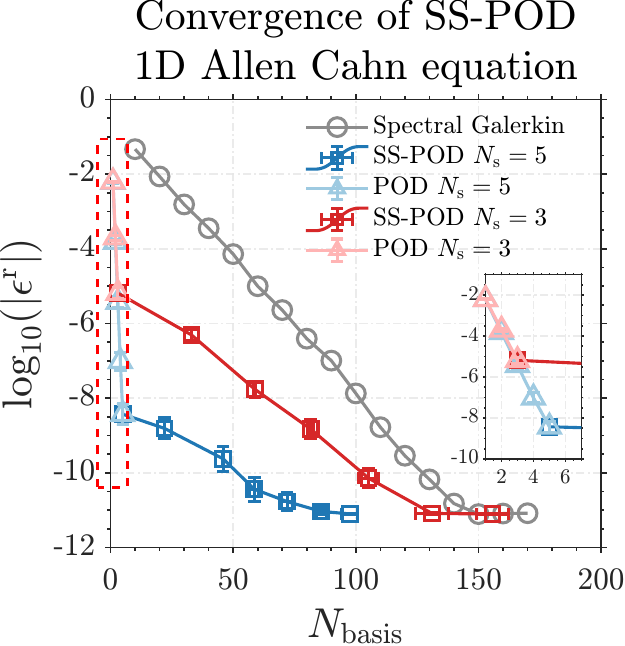}
        \caption{}
        \label{fig:sub4} 
    \end{subfigure}
    \hfill
    \begin{subfigure}[b]{0.32\textwidth}
        \centering
        \includegraphics[width=\textwidth]{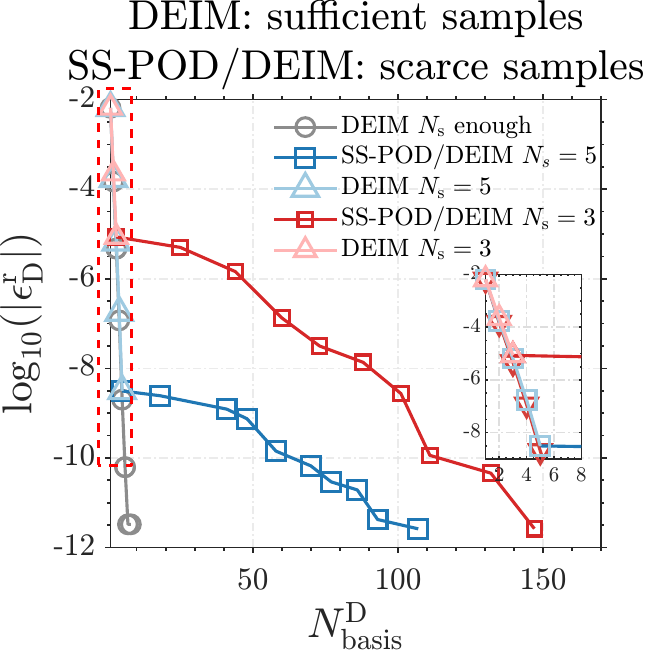}
        \caption{}
        \label{fig:sub5}
    \end{subfigure}
    \hfill
    \begin{subfigure}[b]{0.32\textwidth}
        \centering
        \includegraphics[width=\textwidth]{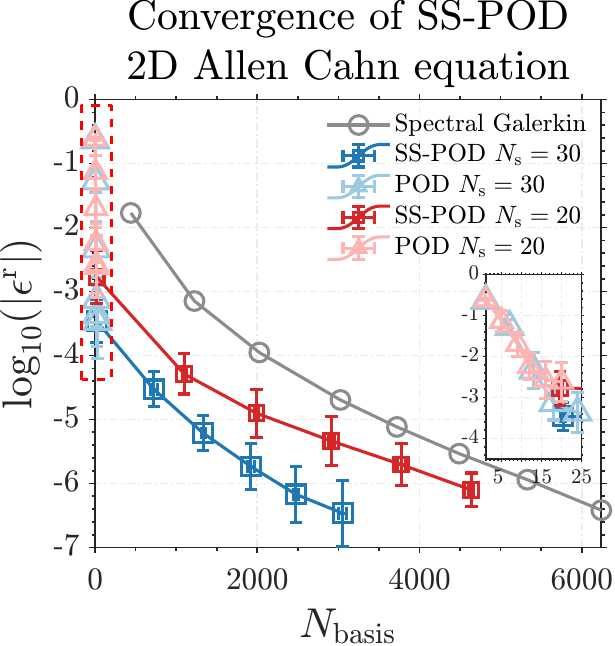}
        \caption{}
        \label{fig:sub6}
    \end{subfigure}

    \caption{Comparison of spectral-Galerkin, POD, and SS-POD results across the numerical examples. Panel (e) reports the SS-POD/DEIM approximation of the nonlinear term in the 1D Allen-Cahn equation. The snapshots are randomly selected.} 
    \label{fig:ROSG_numerical_tests}
\end{figure}

\subsubsection{Helmholtz Equation}\label{numerical:Helm}
The second elliptic benchmark is the 2D Helmholtz equation
\begin{equation}\label{eq:Helmholtz}
    \left \{
    \begin{array}{ll}
    \Delta u + k^2 u = f(x,y) & (x,y) \in \Omega \setminus \partial\Omega \\
    u(x,y) = 0 & (x,y) \in  \partial\Omega
    \end{array}
    \right. 
\end{equation}
with wavenumber $k\in[8,10]$ and source $f(x,y)=\exp(-10[(y-1)^2+(x-0.5)^2])$. The interval includes values close to eigenfrequencies of the homogeneous Dirichlet problem, for example $k=\frac{\pi}{2}\sqrt{3^2+5^2}\approx9.1592$~\cite{trefethen2000spectral}. The reference solution at $k=9.1515$, shown in Fig.~\ref{fig:sol_helm}, is close to the $(3,5)$ mode and therefore provides a more oscillatory test than the Poisson example.
\begin{figure}[ht]
    \centering
    \includegraphics[width=1\textwidth]{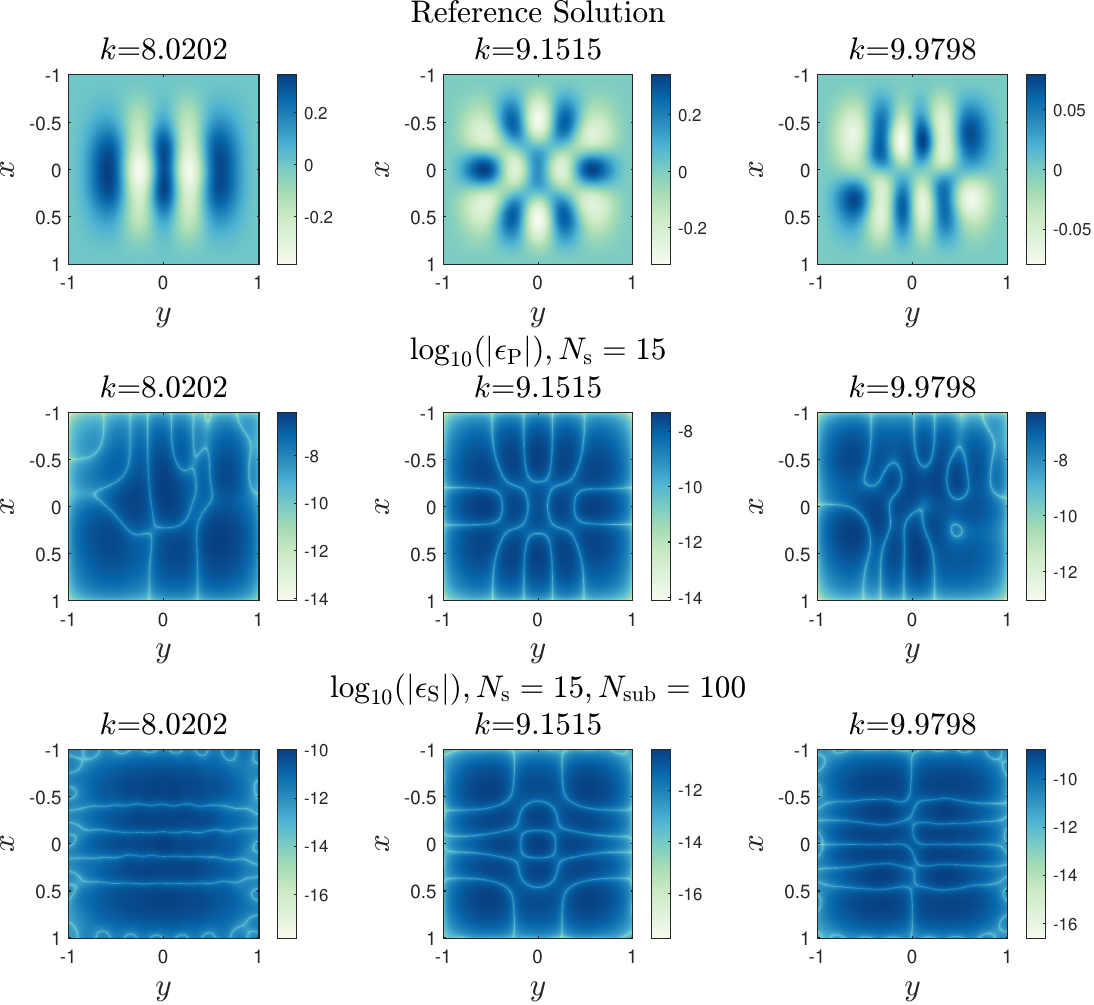}
    \caption{Helmholtz benchmarks: reference solution at $k=9.1515$ (top) and error distributions for standard POD (middle, $N_{\mathrm{basis}}=15$ and $\epsilon_{\mathrm{P}}^\mathrm{r}=2.91\times 10^{-7}$) vs. SS-POD (bottom, $N_{\mathrm{basis}}=229$ and $\epsilon_\mathrm{S}^\mathrm{r}=1.04\times 10^{-8}$).}
    \label{fig:sol_helm}
\end{figure}
The corresponding Galerkin ROM is
\begin{equation}
    (\av{\tilde{\M \Phi}, \M \Phi_{xx}}_{\Set{V}} + \av{\tilde{\M \Phi}, \M \Phi_{yy}}_{\Set{V}} + k^2\av{\tilde{\M \Phi}, \M \Phi}_{\Set{V}}) \V a(k) = \av{\tilde{\M \Phi}, \V f}_{\Set{V}}.
\end{equation}

Fig.~\ref{fig:ROSG_numerical_tests}(b) shows larger error variability for standard POD in the near-resonant regime with limited snapshots ($N_{\mathrm{s}}=15,17$). SS-POD reduces this variability in the tested configurations by separating the snapshot information across spectral subspaces.

\subsection{Linear, Time-dependent Case: 1D Heat Equation}\label{numerical:heat}

We next solve a 1D heat equation with a multiscale source:
\begin{equation}\label{eq:heat}
    \left \{
    \begin{array}{ll}
    \partial_t u(x,t) = \alpha \nabla^2 u(x,t) + f(x; a_k) & x \in (-1, 1), t \in [0,1] \\
    u(\pm 1, t) = 0, \quad u(x,0) = 0 &
    \end{array}
    \right.
\end{equation}
where $f(x;a_k)=\prod_{k=1}^{N}(1+\frac{1}{2}\cos(a_k\pi x))(1+\frac{1}{2}\sin(a_k\pi x))\times10^{-3}$ with $N=5$. The parameters are sampled from $[2^{k-1},1.5\times2^{k-1}]$. In this example the reduced basis is learned from temporal snapshots of a single reference trajectory, so data scarcity refers to sparse sampling in time rather than sparse sampling of a parameter grid. Fig.~\ref{fig:ref_sol_heat} illustrates the transient solution and the corresponding error maps.
\begin{figure}[ht]
    \centering
    \includegraphics[width=1\textwidth]{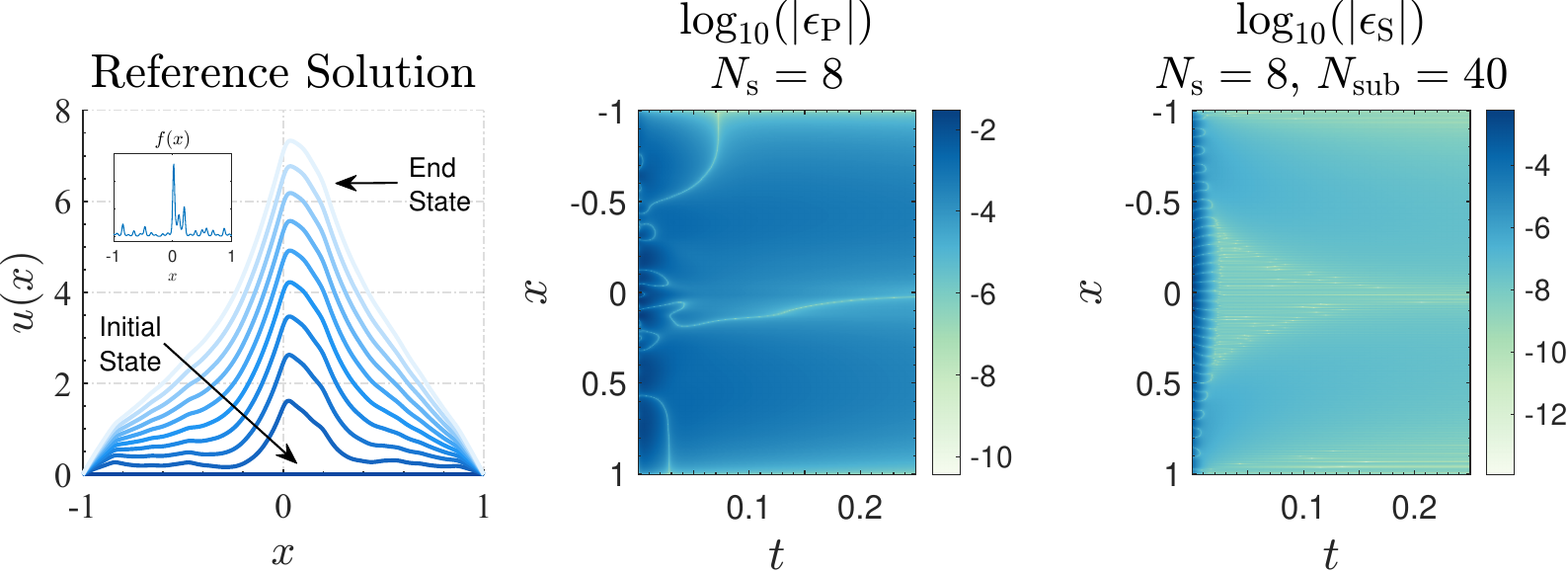}
    \caption{1D Heat equation benchmarks: reference evolution (left) and temporal error maps for standard POD ($N_{\mathrm{basis}}=15, \epsilon_{\mathrm{P}}^\mathrm{r}=9.44\times 10^{-4}$) vs. SS-POD ($N_{\mathrm{basis}}=229, \epsilon_\mathrm{S}^\mathrm{r}=5.98\times 10^{-5}$).}
    \label{fig:ref_sol_heat}
\end{figure}
The temporal discretization of the reduced system leads to
\begin{equation}
\begin{aligned}
    &\left(\av{\tilde{\M \Phi}, \M \Phi}_{\Set{V}}
    - \alpha \frac{\Delta t}{2}\av{\tilde{\M \Phi}, \M \Phi_{xx}}_{\Set{V}}\right)\V a^{t+1} \\
    &\qquad =
    \left(\av{\tilde{\M \Phi}, \M \Phi}_{\Set{V}}
    + \alpha \frac{\Delta t}{2}\av{\tilde{\M \Phi}, \M \Phi_{xx}}_{\Set{V}}\right)\V a^t
    + \Delta t\,\av{\tilde{\M \Phi}, \V f}_{\Set{V}} .
\end{aligned}
\end{equation}

Fig.~\ref{fig:ROSG_numerical_tests}(c) reports the case with $N_{\mathrm{s}}=4$ temporal snapshots. Increasing $N_{\mathrm{sub}}$ reduces the large errors caused by temporal under-sampling for this multiscale transient.

\subsection{Nonlinear Time-dependent Case: 1D Allen Cahn Equation and SS-POD/DEIM}\label{numerical:react_diff}

We now consider the 1D Allen-Cahn equation
\begin{equation}\label{eqn:reaction diffusion}
    \left \{
    \begin{array}{ll}
   \partial_t u = \frac{1}{L^2} \nabla^2 u + u - u^3, &  x\in (-1,+\infty), \ t \in [0,2], \\
    u(-1,t) = 0, \quad \lim_{x \to \infty} u(x,t) = 1,& t \in [0,2],\\
    u(x,0) = u_0(x), & x\in[-1,+\infty).
    \end{array}
    \right.
\end{equation}
with $L=30$ and $u_0(x)=\frac{a(x)-b(x)}{a(x)+b(x)+C}$, where $a(x)=\exp(\frac{\sqrt{2}L}{2}x+\frac{\sqrt{2}L}{2})$, $b(x)=\exp(-\frac{\sqrt{2}L}{2}x-\frac{\sqrt{2}L}{2})$, and $C=10^4$. The reference solution is $u(x,t)=\frac{a(x)-b(x)}{a(x)+b(x)+C\exp(-1.5t)}$. In the numerical implementation, the semi-infinite spatial domain is truncated to $[-1,1]$.
\begin{figure}[ht]
    \centering
    \includegraphics[width=1\textwidth]{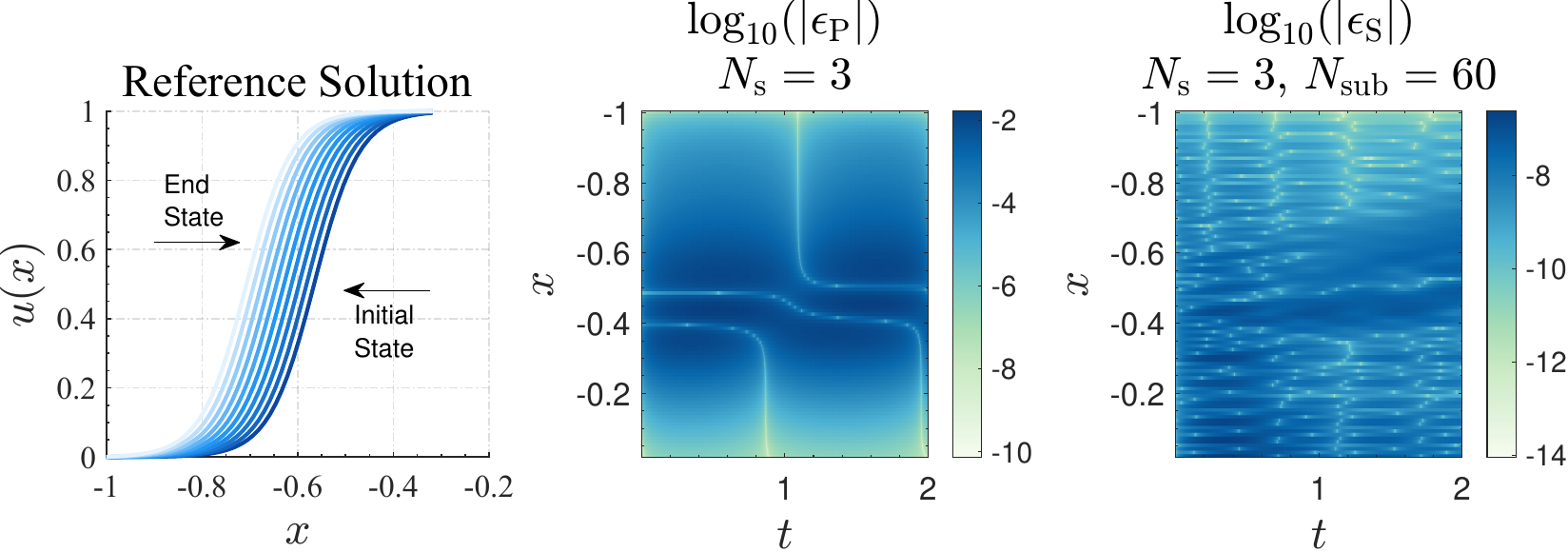}
    \caption{1D Allen-Cahn equation benchmark: reference evolution (left) and error comparisons. For the reported configuration, SS-POD/DEIM ($N_{\mathrm{basis}}=87$) gives $\epsilon_\mathrm{S}^\mathrm{r}=4.93\times 10^{-8}$, compared with $2.97\times 10^{-3}$ for standard POD/DEIM with $N_{\mathrm{basis}}=15$.}
    \label{fig:ref_sol_react_diff}
\end{figure}
The Galerkin reduced system is
\begin{equation}
    \av{\tilde{\M \Phi}, \M \Phi}_{\Set{V}} \frac{d \V a(t)}{d t} = \av{\tilde{\M \Phi}, \frac{\M \Phi_{xx}}{L^2} + \M \Phi}_{\Set{V}} \V a(t) - \av{\tilde{\M \Phi}, \V f(\V a)}_{\Set{V}}.
\end{equation}
The cubic nonlinear term is approximated by SS-POD/DEIM:
\begin{equation}\label{eqn:DEIM_appro_nonlinear}
    \V f(\V a) \approx \M \Psi(\M P^\mathrm{T} \M \Psi)^{-1} (\M P^\mathrm{T} \M \Phi \V a \odot \M P^\mathrm{T} \M \Phi \V a \odot \M P^\mathrm{T} \M \Phi \V a),
\end{equation}
where $\odot$ denotes the Hadamard product. Fig.~\ref{fig:ROSG_numerical_tests}(d) and (e) report the data-scarce cases ($N_{\mathrm{s}}=3,5$). SS-POD/DEIM preserves the DEIM-style reduced nonlinear evaluation and gives a lower relative error than the standard POD/DEIM baseline in these tests.

\subsection{Nonlinear, Time-dependent Case: 2D Allen-Cahn Equation}\label{numerical:AC}
The 2D Allen-Cahn equation reads:
\begin{equation}\label{eq:AC}
    \partial_t u = \epsilon^2 \Delta u + u - u^3, \quad (x,y) \in \Omega = [-1,1]^2.
\end{equation}
Periodic boundary conditions are imposed. The initial condition is generated from a Fourier series with frequencies below 4 and Gaussian random coefficients. The simulation runs to $t=30$, from a random initial state toward a coarsened phase pattern. We use the following first-order operator splitting scheme.

\smallskip
\noindent \textbf{Step 1}: In $[t, t + \Delta t]$, solve the nonlinear reaction part,
\begin{equation}
    \begin{aligned}
        \V{u}_1 &\longleftarrow \V{u}^t, \\
        \V{u}_2 &= \frac{\V{u}_1}{\sqrt{e^{-2 \Delta t} + (1 - e^{-2 \Delta t})(\V{u}_1)^2}}.
    \end{aligned}
\end{equation}
\noindent \textbf{Step 2}: In $[t, t + \Delta t]$, solve the following linear equation with initial state $\V{u}_2$,
\begin{equation}
    \begin{aligned}
        \frac{d \V{u}_3}{dt} &= \epsilon^2 \Delta \V{u}_3, \\
        \V{u}^{t+1} &\longleftarrow \V{u}_3.
    \end{aligned}
\end{equation}

The reduced-order approximation is applied to Step 2:
\begin{equation}
    \av{\tilde{\M \Phi}, \frac{\M \Phi \V a_3 - \M \Phi \V a_2}{\Delta t}}_{\Set{V}} = \av{\tilde{\M \Phi}, \epsilon^2 \Delta \M \Phi \V a_3}_{\Set{V}}.
\end{equation}
\begin{figure}[ht]
    \centering
    \includegraphics[width=1\textwidth]{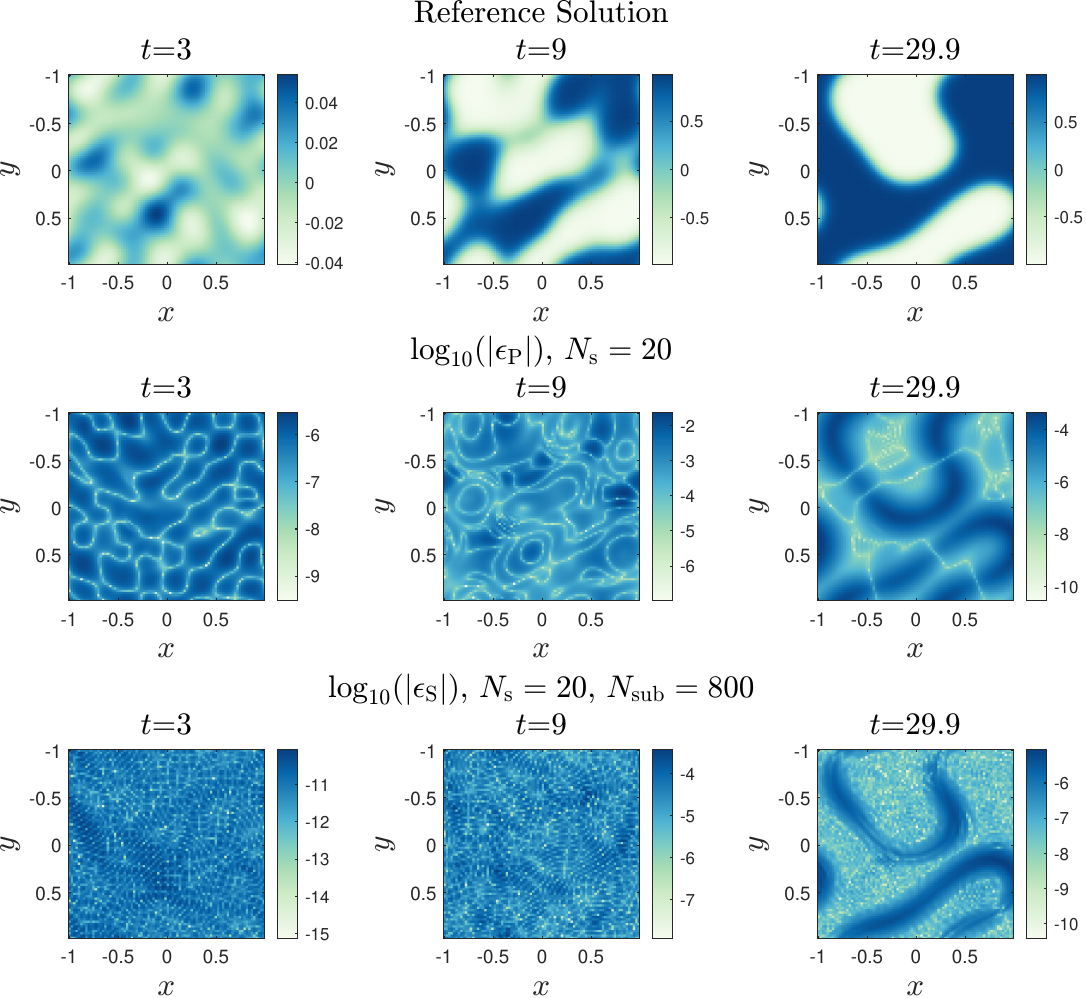}
    \caption{Interfacial coarsening in the Allen-Cahn equation. For the reported configuration, SS-POD ($N_{\mathrm{basis}}=1102, \epsilon_\mathrm{S}^\mathrm{r}=1.96\times 10^{-5}$) gives a lower relative error than POD ($6.78\times 10^{-4}$).}
    \label{fig:ref_sol_AC}
\end{figure}
The error of this operator-splitting scheme is $\mathcal{O}(\Delta t)$. If needed, other schemes such as the second-order Strang splitting scheme~\cite{strang1968construction} can be used as well and combined with SS-POD. We use the first-order splitting throughout the comparison here.

Fig.~\ref{fig:ROSG_numerical_tests}(f) shows larger error during rapid coarsening ($t=9$s) than near the later state ($t=29.9$s). Fig.~\ref{fig:ref_sol_AC} shows the interface evolution and the corresponding error distribution under limited snapshots.

\subsection{Laplace-Beltrami Problem on a Sphere and Point-wise Convergence}\label{numerical:LB}
Finally, we test SS-POD on a non-Euclidean geometry by considering a parametrized surface elliptic problem on the unit sphere:
\begin{equation}\label{eqn:laplace_beltrami}
    \Delta_{3S} u(x,y,z;\alpha) = f(x,y,z;\alpha),
    \quad (x,y,z) \in \mathbb{S}^2,
\end{equation}
where $\mathbb S^2=\{(x,y,z)\in\mathbb R^3:x^2+y^2+z^2=1\}$. The source term is constructed from a localized oscillatory profile adapted from spherical benchmark functions~\cite{Flyer2007}, with an additional parameter-dependent modulation. It is given by
\begin{equation}
    f(x,y,z;\alpha) = F''(g) (|\vec{k}|^2 - g^2) - 2 F'(g) g.
\end{equation}
The surface Laplacian is written in spherical coordinates $(\theta,\phi)$ as
\begin{equation}
    \Delta_{3S} = \frac{1}{\sin \theta}\frac{\partial}{\partial \theta}\left( \sin \theta \frac{\partial}{\partial \theta}\right) + \frac{1}{\sin^2 \theta} \frac{\partial^2}{\partial \phi^2}.
\end{equation}
The auxiliary function $F(g)$ and its derivatives are
\begin{align}
    F(g) &= \cos(g) e^{-\sigma g^2}, \\
    F'(g) &= e^{-\sigma g^2}(-\sin g - 2 \sigma g \cos g), \\
    F''(g) &= e^{-\sigma g^2} [4\sigma^2 g^2 \cos g - \cos g - 2\sigma \cos g + 4\sigma g \sin g].
\end{align}
Here $g=\vec k\cdot\vec r$, with $\vec r=(x,y,z)$ and $\vec k=(3,3,\alpha)$. The parameter $\alpha\in[1,4]$ changes the orientation and local frequency of the wave packet, and the smoothing parameter is fixed at $\sigma=0.005$.
The corresponding analytical reference solution is
\begin{equation}
    u(x,y,z;\alpha) = \cos(g) \exp(-\sigma g^2).
\end{equation}
The resulting solution family contains localized gradients and parameter-sensitive oscillations. Representative solutions are shown in the first row of Fig.~\ref{fig:ref_sol_sl}.

We use spherical harmonics as the spectral prior:
\begin{equation}
    Y_{l,m}(\theta, \phi) = (-1)^{m} \sqrt{\frac{2l+1}{4\pi}\frac{(l-m)!}{(l+m)!}} P_l^m(\cos \theta) e^{im\phi},
\end{equation}
where $P_l^m$ are the associated Legendre polynomials. Because the zero mode $Y_{0,0}$ lies in the nullspace of the Laplace-Beltrami operator, the solution is determined only up to an additive constant unless a mean constraint is imposed. The reported errors are therefore computed after removing the $Y_{0,0}$ component. For SS-POD, the two-dimensional harmonic index $(l,m)$ is reordered into a one-dimensional sequence by increasing degree $l$.
\begin{figure}[ht]
    \centering
    \includegraphics[width=1\textwidth]{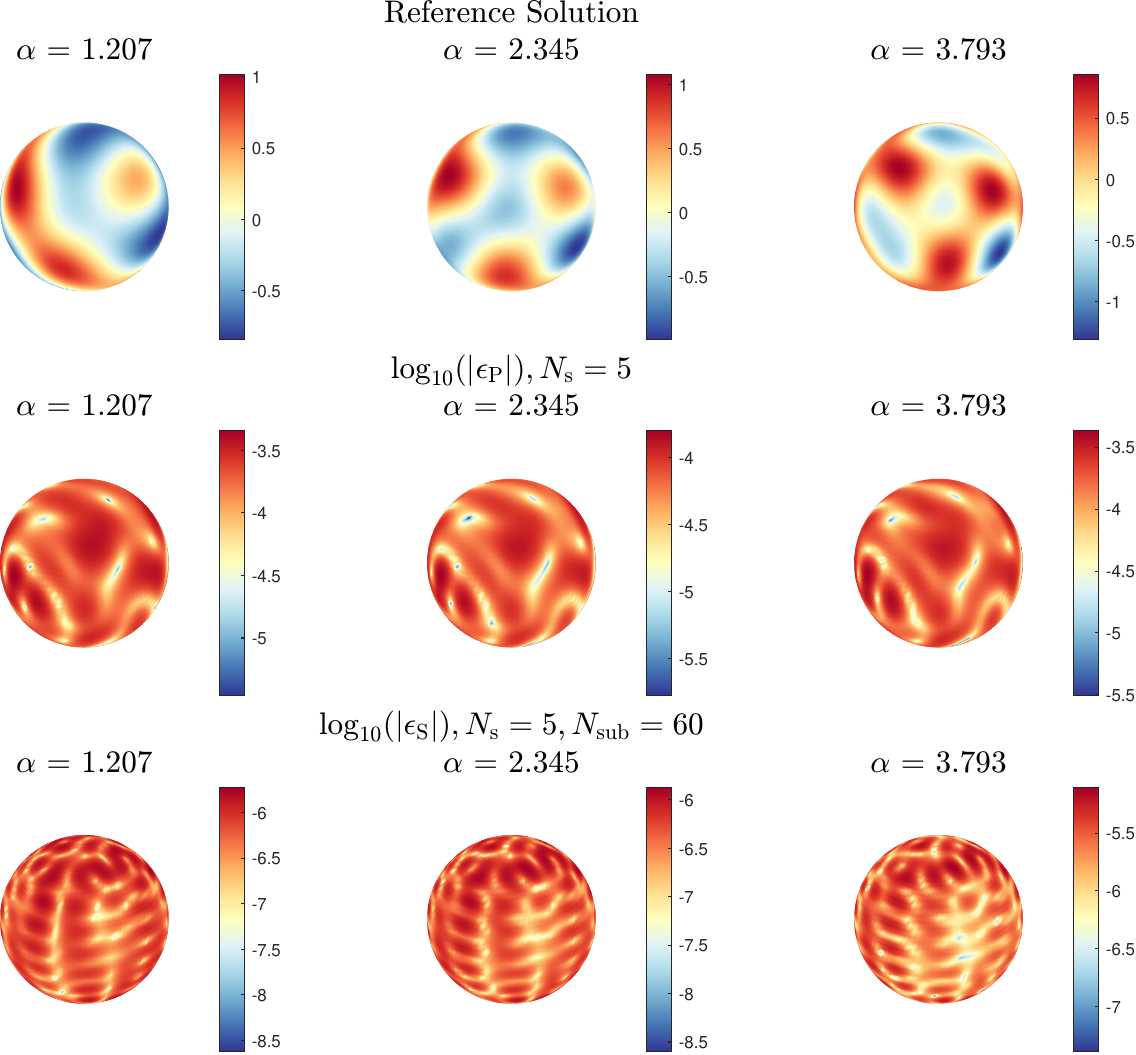}
    \caption{Laplace-Beltrami benchmarks on a unit sphere. The top row displays the reference solutions for different $\alpha$. The middle and bottom rows show the log-scale error distributions for standard POD ($N_{\mathrm{basis}}=5, \epsilon_{\mathrm{P}}^\mathrm{r}=1.85 \times 10^{-4}$) and SS-POD ($N_{\mathrm{basis}}=83, \epsilon_\mathrm{S}^\mathrm{r}=2.14 \times 10^{-6}$), respectively.}
    \label{fig:ref_sol_sl}
\end{figure}

Fig.~\ref{fig:ref_sol_sl} shows the error distribution for $N_{\mathrm{s}}=5$. Standard POD yields a relative error of $1.85 \times 10^{-4}$ with 5 basis functions, whereas SS-POD gives $2.14 \times 10^{-6}$ with 83 basis functions. The POD error concentrates near sharper solution variation; the SS-POD error is lower and more evenly distributed in this test.

Fig.~\ref{fig:comparison_POD_ROSG_sl} gives the parameter-space error distribution. The left panel reports global convergence, and the right panel shows point-wise errors over $\alpha \in [1,4]$ for $N_{\mathrm{s}}=5$. For $N_{\mathrm{sub}}=1$, the error is smallest near training snapshot locations and larger at unobserved parameter values. Increasing $N_{\mathrm{sub}}$ produces a more uniform error profile in this example.

\begin{figure}[ht]
    \centering
    \includegraphics[width=1\textwidth]{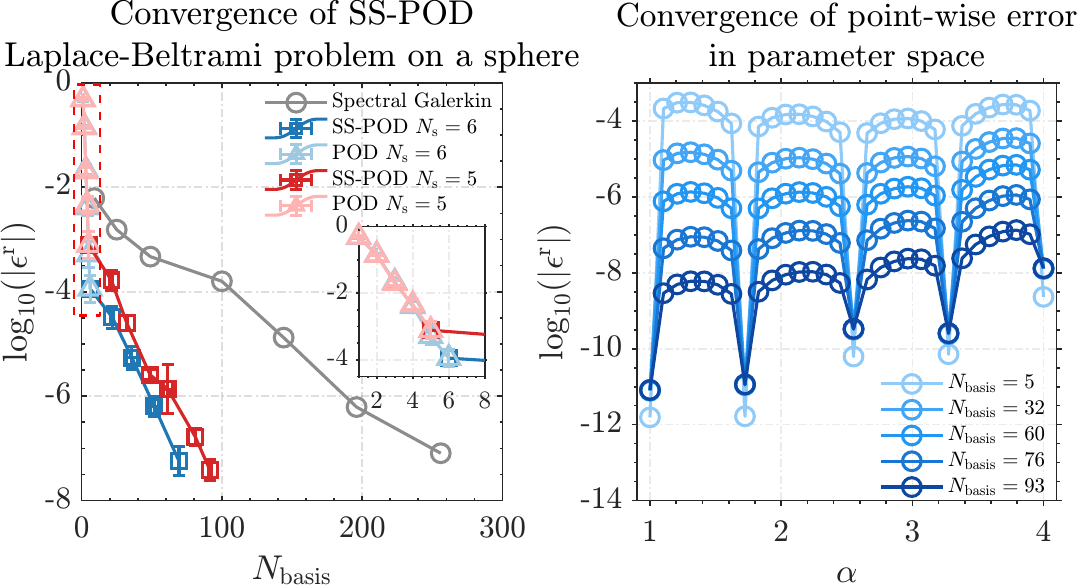}
    \caption{Generalization and convergence analysis for the Laplace-Beltrami problem. (Left) Relative error as a function of the number of basis functions $N_{\mathrm{basis}}$. (Right) Point-wise error across the parameter space $\alpha$ for different basis sizes.}
    \label{fig:comparison_POD_ROSG_sl}
\end{figure}

\section{Conclusions and Outlook}\label{sec:conclusion}

This paper introduced SS-POD, a reduced-basis construction that combines POD with a problem-adapted spectral prior. The method decomposes a finite spectral approximation space into orthogonal subspaces selected by an energy-balanced partition, projects the snapshots onto these subspaces, and performs POD locally before assembling the augmented reduced basis. The SS-POD/DEIM extension applies the same construction to nonlinear snapshots.

Across the reported benchmarks, SS-POD gives lower limited-snapshot errors than standard POD in the tested configurations and uses fewer modes than a purely spectral approximation in several cases. The examples include elliptic, parabolic, nonlinear time-dependent, and spherical problems. The common pattern is that a structured spectral prior reduces the sensitivity of POD-Galerkin ROMs to sparse or uneven snapshot information.

SS-POD remains a linear-subspace ROM, so its performance is still constrained by the intrinsic approximability of the solution manifold. For transport-dominated problems, moving discontinuities, sharp traveling layers, or other regimes with slowly decaying Kolmogorov $N$-width, a compact linear trial space may be insufficient. Such problems likely require localization, nonlinear approximation, or adaptive basis strategies in addition to SS-POD.

SS-POD also requires a useful spectral prior. Fourier, Chebyshev, and spherical harmonic bases fit tensor-product domains and the sphere, but irregular geometries may not admit a convenient global spectral basis. Domain decomposition is a plausible extension: one could partition the physical domain into subregions with local spectral representations, following spectral element methods~\cite{patera1984,canuto2007} and localized reduced basis approaches~\cite{iapichino2012}. This extension is not developed here; it would require separate analysis of interface treatment, local basis coupling, and offline cost.
Another route is to learn POD-compatible subspaces on complex geometries: recent neural subspace POD work uses DeepONet-learned POD subspaces to support Krylov solves on unstructured meshes and CAD-derived domains~\cite{levreroflorencio2026nspod}. This direction addresses a limitation that SS-POD does not resolve, namely the need for a convenient spectral prior on the physical domain.

Future work should test snapshot selection more carefully. The present experiments use random snapshot selection in several tests; systematic sampling, adaptive parameter exploration, or RBF-assisted interpolation may improve parameter-space coverage. For transport-dominated regimes, SS-POD may be combined with Principal Interval Decomposition~\cite{san2015principal} or with adaptive and hybrid ROM strategies developed to mitigate the Kolmogorov barrier in multiscale kinetic transport problems~\cite{jin2025adaptive}.

\end{document}